\newtheorem{lemma}{Lemma}
\newtheorem{corollary}{Corollary}
\newtheorem{proposition}{Proposition}
\newtheorem{definition}{Definition}
\newcommand{\bs}[1]{\boldsymbol{#1}}
\title{{\Large {\bf Spectral properties of weighted line digraphs   
}
}}
\author{ 
{\small 
Etsuo Segawa,$^{1}$ 
\footnote{email address e-segawa@m.tohoku.ac.jp
}\quad 
}\\ 
{\scriptsize $^{1}$ 
Graduate School of Information Sciences, Tohoku University
}\\
{\scriptsize 
Aoba, Sendai 980-8579, Japan
} \\
} 
\date{\empty }
\begin{document}
\maketitle

\par\noindent
\begin{small}
\par\noindent
{\bf Abstract}. 
In this paper, we treat some weighted line digraphs which are induced by a connected and undirected graph. 
For a given graph $G$, the adjacency matrix of the weighted line digraph $W$ is determined by a boundary operator 
from an arc-based space to a vertex-based space. 
We see that depending on the boundary operator and the Hilbert spaces, 
$W$ has different kind of an underlying stochastic transition operator. 
As an application, we obtain the spectrum of the positive support of cube of the Grover matrix in a large girth of the graph.  
\footnote[0]{
{\it Key words and phrases.} 
Spectral mapping theorem, Szegedy walk, Quantum graph, Positive support of the cube of the Grover walk
}

\end{small}

\setcounter{equation}{0}

\section{Introduction}
Quantum walk was proposed as iterations of a unitary operator on some discrete space~\cite{Gudder, Severini}. 
Quantum walks (QWs) have been intensively studied since its efficiency of spatial quantum search were shown i.g., \cite{Am} and see its reference for more detail. 
Now QWs are investigated from various fields, for example, implementation in experiments, condensed matter physics, probability theories, 
and graph theories and so on. 

For a given graph $G$, the spectral mapping theorem of quantum walks have been founded by Szegedy~\cite{Sze}. 
In this paper, we consider this spectral map to three kinds of quantum walks. 
First we consider a special class of quantum walks, which can be constructed on any (undirected) graphs and its spectrum has a
relation to a random walk. This class of QW is called the Szegedy walk including the Grover walk. 
We refine the original one to see a relation to a graph laplacian and the Grover walks which work well in the spacial quantum search for hypercube, 
Johnson graph and 
finite $d$-dimensional lattice and so on, see \cite{Am} and its reference. 
The original Szegedy walk introduced by \cite{Sze} is equivalent to square of the Szegedy walk treated here. 

Secondly we treat a quantum graph walk introduced by \cite{HKSS:YMJ}. 
Recently, a relationship between this quantum walk and the quantum graph~\cite{Exner}, 
which is a system of a linear Schr{\"o}dinger equations with boundary conditions. 
has been found~\cite{HKSS:YMJ,Tan}. 
In this paper, we develop this study by connecting the spectrum of the original quantum graph and this quantum walk. 
To this end, we apply the spectral map. 

Finally we also consider the cube of positive support of the Grover walk. 
There are some trials to apply QW to graph isomorphism problems. 
For a matrix $M$, the positive support of $M$, $M^+$, is denoted by 
	\[ (M^+)_{i,j}= \begin{cases} 1 & \text{: $(M)_{i,j}\neq 0$} \\ 0 & \text{: otherwise} \end{cases} \]
It is suggested that the spectra of $(U^3)^+$, which is the positive support of the cube of the Grover matrix $U^3$, 
outperforms distinguishing strongly regular graphs in \cite{Emms}. 

This paper is organized as follows. 
In Section 2, we construct quantum walks on $\ell^2(m_A,A)$, where $A$ is the set of the symmetric arcs and $m_A$ is a positive real valued measure. 
In Section 3, first, we present the spectral mapping theorem from $C^0(G)$ to $C^1(G)$, which is the key in our paper. 
secondly, we take three examples Szegedy walk~\cite{HKSS:JFA,Sze}, quantum graph walk~\cite{Exner,HKSS:YMJ} and 
the positive supports of the Grover walks~\cite{Emms,HKSS:JMI,HKSS:PJMI} as applications.  
\section{Szegedy walk: reconsideration}
Let $G=(V(G),E(G))$ be a connected and locally finite graph, where $V(G)$ is the set of vertices and $E(G)$ is the set of all edges. 
We set $A(G)=\{(u,v): \{u,v\}\in E(G)\}$ as the set of all arcs of $G$, and the origin vertex and terminal one of $e=(u,v)\in A(G)$ 
are denoted by $o(e)=u$ and $t(e)=v$. 
The inverse arc $e=(u,v)$ are denoted by $\bar{e}=(v,u)$. 
We take 
	\[C^0(G)=\{f:V\to \mathbb{C}\},\;\; C^1(G)=\{\psi:A\to \mathbb{C}\} \] 
as the space of all functions on $V(G)$ and $A(G)$, respectively. 
We prepare positive real valued functions $m_A: C^1(G) \to C^1(G)$ and $m_V: C^0(G)\to C^0(G)$. 
For given such $m_V$ and $m_A$, we also set $\ell^2(m_V;V)$ and  $\ell^2(m_A;A)$ as the square-summable functions with respect to the inner product 
	\begin{align} 
        \langle f_1,f_2\rangle_{m_V,V} &= \sum_{u\in V(G)} \overline{f_1(u)}f_2(u)m_V(u),  \\
	\langle \psi_1,\psi_2\rangle_{m_A,A} &= \sum_{e\in A(G)} \overline{\psi_1(u)}\psi_2(e)m_A(e),   
        \end{align}
respectively. 
From now on, we construct the time evolution of a quantum walk on $\ell^2(m_A;A)$. 
We set a complex valued function $w: C^1(G) \to C^1(G)$ to define boundary operator $d: C^1(G)\to C^0(G)$: 
	\begin{equation}\label{Eq:d} (d\psi)(u)=\sum_{e:t(e)=u}\overline{w(\bar{e})}\psi(e). \end{equation}
This operator takes just terminus one to get a unitary operator on $\ell^2(m_A,A)$, which is different from a ``usual" boundary operator. 
We introduce coboundary operator $d^*: C^0(G) \to C^1(G)$ satisfying $\langle d^*f, \psi \rangle_{m_A,A}=\langle f,d\psi \rangle_{m_V,V}$:  
	\begin{align*}
        \langle f, d\psi \rangle_{m_V,V} &= \sum_{u\in V} \overline{f(u)}\sum_{e:t(e)=u}\psi(e)\overline{w(\bar{e})} m_V(u) \\
        	&= \sum_{e\in A} \psi(e) \overline{f(t(e))} \frac{m_V(t(e))\overline{w(\bar{e})}}{m_A(e)} m_A(e) \\
                &= \langle d^*f, \psi \rangle_{m_A,A}.
	\end{align*}
So we have \begin{equation}\label{Eq:d*} (d^*f)(e)=f(t(e))\frac{m_V(t(e))w(\bar{e})}{m_A(e)}. \end{equation} 
Put $S: C^1(G)\to C^1(G)$ by $(S\psi)(e)=\psi(\bar{e})$. 
We define $U: C^1(G)\to C^1(G)$ as follows. 
	\begin{equation}\label{Eq:U} U=S(2d^*d-1_A). \end{equation}
	\begin{definition} (Szegedy walk on $G(V,E)$) \\
        When $U$ defined in Eq.~(\ref{Eq:U}) is norm conservative in $\ell^2(A;m_A)$ i.e., $||U\psi||_{m_A,A}=||\psi||_{m_A,A}$ for any $\psi\in \ell^2(m_A;A)$, we define the quantum walk as follows: 
        The quantum walk with a unit initial state $\psi_0\in \ell^2(m_A;A)$ is defined by the following sequence of probability distributions 
        $\{\mu_n^{(\psi_0)}\}_{n\in \mathbb{N}}$, 
        where $\mu_n^{(\psi_0)}:A\to [0,1]$ is determined by 
        \begin{equation}\label{Def:dist}
        	\mu_n^{(\psi_0)}(e)=|\psi_n(e)|^2 m_A(e).
        \end{equation}
        Here $\psi_n$ is the $n$-th iteration of $U$ i.e., $\psi_n=U\psi_{n-1}$ $(n\geq 1)$. 
        In particular, we define the finding probability of a quantum walker at time $n$ in location of $u\in V$, $\nu_n: V\to [0,1]$, by 
        	\[ \nu_n^{(\psi_0)}(u)=\sum_{e:t(e)=u}\mu_n^{(\psi_0)}(e).  \]
	\end{definition} 
We provide two ways which give the operator $U$ defined in Eq.~(\ref{Eq:U}) preserving the norm with respect to $\langle \cdot,\cdot\rangle_{m_A,A}$. 
Both of them have some underlying dynamics on $C^0(G)$; the first one is a random walk while the second one is a laplacian. 
\begin{enumerate}
\item Setting 1
\begin{enumerate}
\item $\sum_{e:o(e)=u}|w(e)|^2=1$ for all $u\in V$;
\item $m_V(u)=1,\;m_A(e)=1$ for all $u\in V$ and $e\in A$ . 
\end{enumerate}
\item Setting 2
\begin{enumerate}
\item $\sum_{e:o(e)=u}w(e)=1$ for all $u\in V$; 
\item $m_A(e)=m_V(o(e))w(e)=m_V(t(e))w(\bar{e})$ for all $e\in A$. 
\end{enumerate}
\end{enumerate}
We call the condition (2) in Setting 2 ``extended detailed balanced condition". 
We can easily check that both of the coboundary operators $d^*$ in the setting of (1) and (2) are isometric. 
	\begin{proposition}
	Both settings (1) and (2) imply that $U$ preserves the norm. 
	\end{proposition}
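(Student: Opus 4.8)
The plan is to recognize $U=S(2d^*d-1_A)$ as a product of two operators that are each norm-preserving on $\ell^2(m_A;A)$, namely the arc-reversal $S$ and the reflection $R:=2d^*d-1_A$. Once that is established, $||U\psi||_{m_A,A}=||S(R\psi)||_{m_A,A}=||R\psi||_{m_A,A}=||\psi||_{m_A,A}$ follows at once, uniformly in the choice of $(m_V,m_A,w)$.

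First I would check that $S$ is an isometry. Since $(S\psi)(e)=\psi(\bar{e})$, writing out $||S\psi||_{m_A,A}^2=\sum_{e\in A(G)}|\psi(\bar{e})|^2m_A(e)$ and reindexing by $e\mapsto\bar{e}$ shows that $S$ preserves the norm precisely when $m_A(e)=m_A(\bar{e})$ for every arc $e$. This is immediate in Setting 1, where $m_A\equiv1$; in Setting 2 it is the content of the two equalities in condition (2b), since $m_A(e)=m_V(o(e))w(e)$ while $m_A(\bar{e})=m_V(o(\bar{e}))w(\bar{e})=m_V(t(e))w(\bar{e})$, and the latter equals $m_A(e)$ by the second equality in (2b).

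The main point to verify is that $R$ is a reflection, which comes down to the claim stated just before the proposition: $d^*$ is isometric, equivalently $dd^*=1_V$. Using (\ref{Eq:d}) and (\ref{Eq:d*}),
\[
(dd^*f)(u)=\sum_{e:t(e)=u}\overline{w(\bar{e})}\;f(u)\,\frac{m_V(u)\,w(\bar{e})}{m_A(e)}=f(u)\,m_V(u)\sum_{e:t(e)=u}\frac{|w(\bar{e})|^2}{m_A(e)}.
\]
In Setting 1 this is $f(u)\sum_{e:t(e)=u}|w(\bar{e})|^2=f(u)\sum_{e':o(e')=u}|w(e')|^2=f(u)$ by condition (1a). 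In Setting 2, where positivity of $m_V,m_A$ in (2b) forces $w$ to be positive real and $m_A(e)=m_V(t(e))w(\bar{e})=m_V(u)w(\bar{e})$ whenever $t(e)=u$, the sum collapses to $f(u)\sum_{e:t(e)=u}w(\bar{e})=f(u)\sum_{e':o(e')=u}w(e')=f(u)$ by condition (2a). Hence $dd^*=1_V$ in both cases, so $P:=d^*d$ is self-adjoint and $P^2=d^*(dd^*)d=d^*d=P$, i.e.\ $P$ is an orthogonal projection. Therefore $R=2P-1_A$ satisfies $R^*=R$ and $R^2=4P^2-4P+1_A=1_A$: it is a self-adjoint unitary, in particular norm-preserving.

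Combining the two steps gives $||U\psi||_{m_A,A}=||\psi||_{m_A,A}$, which is the assertion. I do not expect a genuine obstacle here; the only things that need care are the reversal symmetry $m_A(e)=m_A(\bar{e})$ needed for $S$ (trivial in Setting 1, built into condition (2b) in Setting 2) and, in Setting 2, the implicit positivity of $w$ that lets $|w(\bar{e})|^2/m_A(e)$ simplify — without it the cancellation in the displayed computation would fail.
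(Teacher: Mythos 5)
Your proposal is correct and follows essentially the same route as the paper: both rest on the two facts that $S$ preserves the inner product (equivalently $m_A(e)=m_A(\bar{e})$) and that $dd^*=1_V$, the paper then expanding $\|(2d^*d-1_A)\psi\|^2_{m_A,A}$ term by term while you package the same cancellation as the statement that $2d^*d-1_A$ is a reflection about the projection $d^*d$. Your explicit verification of the two facts in Settings 1 and 2 (including the observation that condition (2b) forces $w>0$ and the symmetry $m_A(e)=m_A(\bar e)$) fills in details the paper dismisses as ``easy to check,'' but does not change the argument.
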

\begin{proof}
It is easy to check that both settings (1) and (2) provide 
        \begin{align} 
        	\langle S\psi_1,S\psi_2 \rangle_{m_A,A} &= \langle \psi_1,\psi_2 \rangle_{m_A,A}. \label{Eq:SS}\\
                dd^* &= 1_V \label{Eq:dd}
        \end{align} 
By Eq.~(\ref{Eq:SS}), it holds that 
	\begin{align}
        \langle U\psi,U\psi \rangle_{m_A,A} &= \langle S(2d^*d-1_A)\psi,S(2d^*d-1_A)\psi \rangle_{m_A,A}=\langle (2d^*d-1_A)\psi,(2d^*d-1_A)\psi \rangle_{m_A,A} \notag \\
        	&= 4 \langle d^*d\psi,d^*d\psi \rangle_{m_A,A}-2\langle \psi,d^*d\psi \rangle_{m_A,A}-2 \langle d^*d\psi,\psi \rangle_{m_A,A}+\langle \psi,\psi \rangle_{m_V,V} \label{Eq:pro1}
        \end{align}
From Eq.~(\ref{Eq:dd}), the inner product of the first term of RHS is rewritten by
 	\begin{align}
         \langle d^*d\psi,d^*d\psi \rangle_{m_A,A} &= \langle d\psi,d\psi \rangle_{m_V,V} \notag \\
         	&= \langle \psi,d^*d\psi \rangle_{m_V,V}=\langle d^*d\psi,\psi \rangle_{m_V,V}. \label{Eq:pro2}
	\end{align}
Inserting Eqs.~(\ref{Eq:pro2}) into (\ref{Eq:pro1}), we have $||U\psi||^2_{m_A,A}=||\psi||^2_{m_A,A}$. 
\end{proof}
We take $U_1$ and $U_2$ as the time evolution of quantum walks with respect to the inner products under the settings of 1 and 2, respectively. 
Equivalently, $U_1$ and $U_2$ are expressed by 
	\begin{align}
        (U_1\psi)(e)&= \sum_{f:o(e)=t(f)} \left(2w_1(e)\overline{w_1(\bar{f})}-\delta_{e,\bar{f}}\right)\psi(f), \label{Eq:U1}\\
        (U_2\psi)(e)&= \sum_{f:o(e)=t(f)} \left(2w_2(\bar{f})-\delta_{e,\bar{f}}\right)\psi(f), \label{Eq:U2}
        \end{align}
where 
\begin{align}
(d_j\psi)(u) &= \sum_{e:t(e)=u} \psi(e)\overline{w_j(\bar{e})} \;\;(j=1,2)\\
(d_j^*f)(e) &= \begin{cases}
		f(t(e))w_1(\bar{e}) & \text{: $j=1$,} \\
                f(t(e)) & \text{: $j=2$.}
        	\end{cases}
\end{align}
Here $w_j: A\to \mathbb{C}$ $(j=1,2)$ satisfy settings 1 (a) and 2, respectively. 
These two quantum walks are essentially the same in the following meaning: 
\begin{proposition}
Assume that $w_1^2(e)=w_2(e)>0$ $(e\in A)$. 
For $\psi\in C^1(G)$, 
let $\psi_n^{(\psi)}=U_1^n\psi$, $\widetilde{\psi}_n^{(\psi)}=U_2^n\psi$ and $\mu_n^{(\psi)}$ and $\widetilde{\mu}_n^{(\psi)}$ be 
the distributions of QWs at time $n$ defined in Eq.~(\ref{Def:dist}) under the settings 1 and 2, respectively. 
Then we have 
\begin{enumerate}
\item $\widetilde{\psi}_n^{(\psi_0)}=\mathfrak{D}^{-1/2}\psi_n^{(\mathfrak{D}^{1/2}\psi_0)}$
\item $\widetilde{\mu}_n^{(\psi_0)}=\mu_n^{(\mathfrak{D}^{1/2}\psi_0)}$
\end{enumerate}
\end{proposition}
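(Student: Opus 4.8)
The plan is to exhibit $U_2$ as a diagonal conjugate of $U_1$, namely $U_2=\mathfrak{D}^{-1/2}U_1\mathfrak{D}^{1/2}$, where $\mathfrak{D}$ is the multiplication operator on $C^1(G)$ given by $(\mathfrak{D}\psi)(e)=m_A(e)\psi(e)$ with $m_A$ the arc measure of Setting 2, and then to read off both assertions from this intertwining. First I would record the elementary consequences of the hypothesis $w_1^2(e)=w_2(e)>0$: the function $w_1$ is real and strictly positive, so every conjugation in (\ref{Eq:U1}) disappears, $\overline{w_1(\bar f)}=w_1(\bar f)$; moreover, the extended detailed balanced condition $m_A(e)=m_V(o(e))w_2(e)=m_V(t(e))w_2(\bar e)$ gives both $m_A(\bar e)=m_A(e)$ and the two factorizations
\[ m_A(e)^{1/2}=m_V(o(e))^{1/2}w_1(e)=m_V(t(e))^{1/2}w_1(\bar e), \]
which are exactly the identities that drive the computation.

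The core step is the intertwining relation. I would compute the $(e,f)$-entry of $\mathfrak{D}^{-1/2}U_1\mathfrak{D}^{1/2}$ from (\ref{Eq:U1}): it is supported on pairs with $o(e)=t(f)$ and there equals $\bigl(2w_1(e)w_1(\bar f)-\delta_{e,\bar f}\bigr)\bigl(m_A(f)/m_A(e)\bigr)^{1/2}$. For the term with $\delta_{e,\bar f}$, the constraint forces $f=\bar e$, and $m_A(\bar e)=m_A(e)$ makes the ratio equal to $1$, so this term is unchanged. For the off-diagonal term, since $o(e)=t(f)$ I apply the first factorization above to $m_A(e)^{1/2}$ and the second to $m_A(f)^{1/2}$ to obtain $\bigl(m_A(f)/m_A(e)\bigr)^{1/2}=w_1(\bar f)/w_1(e)$, hence $2w_1(e)w_1(\bar f)\cdot\bigl(m_A(f)/m_A(e)\bigr)^{1/2}=2w_1(\bar f)^2=2w_2(\bar f)$, which is precisely the corresponding entry of $U_2$ in (\ref{Eq:U2}). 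The only place calling for care is this bookkeeping: one must track whether each measure factor sits at the origin or the terminus of its arc and invoke the correct half of the detailed balance equality for $e$ and for $f$; everything else is mechanical.

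Granting the intertwining $U_2=\mathfrak{D}^{-1/2}U_1\mathfrak{D}^{1/2}$, assertion (1) follows by telescoping the inner $\mathfrak{D}^{1/2}\mathfrak{D}^{-1/2}$ factors: $U_2^n=\mathfrak{D}^{-1/2}U_1^n\mathfrak{D}^{1/2}$, so $\widetilde{\psi}_n^{(\psi_0)}=U_2^n\psi_0=\mathfrak{D}^{-1/2}U_1^n(\mathfrak{D}^{1/2}\psi_0)=\mathfrak{D}^{-1/2}\psi_n^{(\mathfrak{D}^{1/2}\psi_0)}$. I would also note, for consistency with the notion of a unit initial state, that $\sum_{e}|(\mathfrak{D}^{1/2}\psi_0)(e)|^2=\sum_e|\psi_0(e)|^2 m_A(e)=\|\psi_0\|_{m_A,A}^2$, so a unit state in Setting 2 is carried to a unit state in Setting 1.

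Finally, assertion (2) is obtained by substituting (1) into the definition (\ref{Def:dist}). Under Setting 2 the arc measure is $m_A$, so
\[ \widetilde{\mu}_n^{(\psi_0)}(e)=\bigl|\widetilde{\psi}_n^{(\psi_0)}(e)\bigr|^2 m_A(e)=m_A(e)^{-1}\bigl|\psi_n^{(\mathfrak{D}^{1/2}\psi_0)}(e)\bigr|^2 m_A(e)=\bigl|\psi_n^{(\mathfrak{D}^{1/2}\psi_0)}(e)\bigr|^2, \]
whereas under Setting 1 the arc measure is identically $1$, so this last quantity is exactly $\mu_n^{(\mathfrak{D}^{1/2}\psi_0)}(e)$, completing the proof.
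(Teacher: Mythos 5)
Your proposal is correct and follows essentially the same route as the paper: establish the intertwining $U_2=\mathfrak{D}^{-1/2}U_1\mathfrak{D}^{1/2}$ (Eq.~(\ref{Eq:U2U1})), deduce part (1) by telescoping powers, and obtain part (2) by substituting into the definition (\ref{Def:dist}) with the measures of the two settings. The only difference is that you verify the intertwining entry-by-entry via the extended detailed balance condition, which the paper asserts by direct comparison of (\ref{Eq:U1}) and (\ref{Eq:U2}); your version simply makes that step explicit.
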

\begin{proof}
For the proof of part 1, comparing Eq.~(\ref{Eq:U1}) with Eq.~(\ref{Eq:U2}) immediately provides that 
if $w_1^2(e)=w_2(e)\in \mathbb{R}$, then we have 
	\begin{equation}\label{Eq:U2U1}
        U_2=\mathfrak{D}^{-1/2} U_1 \mathfrak{D}^{1/2}, 
        \end{equation}
where $(\mathfrak{D}\psi)(e)=m_A(e)\psi(e)$. 
Equation (\ref{Eq:U2U1}) implies part 1. 
For the proof of part 2, using Eq.~(\ref{Eq:U2U1}),
	\begin{align*} 
        \widetilde{\mu}_n^{(\psi_0)}(e) &= |\widetilde{\psi}_n(e)|^2m_D(e) 
          =\left| \mathfrak{D}^{1/2} (\mathfrak{D}^{-1/2}U_1^n\mathfrak{D}^{1/2}\psi_0)(e) \right|^2 
          =|(U_1^n\mathfrak{D}^{1/2}\psi_0)(e)|\\
          &= \mu_n^{(\mathfrak{D}^{1/2}\psi_0)}(e)
        \end{align*}
\end{proof}

\section{Spectral map}
\subsection{Spectral map}
For a given boundary operator $d$, let us define an operator on $C^1(G)$ by 
	\begin{equation}
        W=S(c d^*d-1_A),
        \end{equation}
We assume that the following value $c'$ is constant with respect to the vertices of the given graph: 
	\[ c'=\sum_{e:t(e)=u}\frac{m_V(t(e))|w(\bar{e})|^2}{m_A(e)}\;\;\mathrm{for\;any} u\in V. \]
This assumption implies $dd^*=1_V$. We also assume $cc'\neq 1$ for some technical reason. 
Remark that in the present stage, $W$ has no restriction of the norm conservation. 
We provide examples by taking appropriate functions $m_A$, $m_V$, $w$ and parameter $c\in \mathbb{C}$ in the later subsection. 
We set 
	\[ \mathcal{H}_\pm =\{ \psi\in \ell^2(A): \psi(\bar{e})=\pm \psi(e) \;\;(e\in A) \}. \]
The following lemma is the key in this paper, which is an extended version of Proposition~1 in \cite{HKSS:JFA}. 
\begin{lemma}\label{keylemma}
Let $G$ be a finite graph. 
Put $\varphi(x)=(x+(cc'-1)x^{-1})/c$ and $\mathcal{L}=d^*(C^1(G))+Sd^*(C^1(G))\subseteq C^1(G)$. 
Then we have $W(\mathcal{L})=\mathcal{L}$  and 
	\begin{align} 
        \sigma(W|_{\mathcal{L}}) &= 
		\begin{cases}
        	\varphi^{-1}\left(\sigma (d^*Sd)\setminus \{\pm c'\}\right) \cup \{ \pm (cc'-1) \} & \text{: $\pm c'\in \sigma(dSd^*)$}\\
        	\varphi^{-1}\left(\sigma (d^*Sd)\setminus \{c'\}\right) \cup \{ cc'-1 \} & \text{: $c'\in \sigma(dSd^*)$}\\
        	\varphi^{-1}\left(\sigma (d^*Sd)\setminus \{- c'\}\right) \cup \{ -(cc'-1) \} & \text{: $- c'\in \sigma(dSd^*)$}\\
        	\varphi^{-1}\left(\sigma (d^*Sd)\right) & \text{: otherwise}
		\end{cases} \label{specmap} \\
        \sigma(W|_{\mathcal{L}^\bot}) &= 
        	\begin{cases}
                \{1,-1\} & \text{: $\mathcal{H}_-\cap \mathrm{ker}(d)\neq \{\bs{0}\}$, $\mathcal{H}_+\cap \mathrm{ker}(d)\neq \{\bs{0}\}$}\\
                \{1\} & \text{: $\mathcal{H}_-\cap \mathrm{ker}(d)\neq \{\bs{0}\}$, $\mathcal{H}_+\cap \mathrm{ker}(d)= \{\bs{0}\}$}\\
                \{-1\} & \text{: $\mathcal{H}_-\cap \mathrm{ker}(d)= \{\bs{0}\}$, $\mathcal{H}_+\cap \mathrm{ker}(d)\neq \{\bs{0}\}$}\\
                \emptyset & \text{: otherwise.} \label{genesis}
                \end{cases}
        \end{align}
The eigenfunction for $\lambda \in \sigma(W|_\mathcal{L})$, $\psi_\lambda\in C^1(G)$, 
can be expressed by using the eigenfunction of $dSd^*$ for $\varphi(\lambda)\in \sigma(dSd^*)$, $f_{\varphi(\lambda)}\in C^0(G)$, as follows: 
	\begin{equation}\label{eigenfunc} 
        \psi_\lambda
        =\begin{cases}
        (d^{*}-\lambda Sd^{*})f_{\varphi(\lambda)} & \text{: $\lambda\neq \{\pm (cc'-1)\}$} \\
        \pm d^*f_{\varphi(\lambda)} & \text{: $\pm c'\in \sigma(dSd^*)$, $\lambda\in \{\pm (cc'-1)\}$} 
        \end{cases}
        \end{equation}
\end{lemma}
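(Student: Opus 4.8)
The plan is to follow the strategy of the Szegedy-type spectral mapping theorem (as in Proposition~1 of~\cite{HKSS:JFA}), adapting it to the present weighted setting and to the fact that $W$ is not assumed unitary. The key structural fact is the \emph{RKS-type decomposition} $C^1(G) = \mathcal{L} \oplus \mathcal{L}^\bot$, where $\mathcal{L} = d^*(C^1(G)) + Sd^*(C^1(G))$. First I would establish that $\mathcal{L}$ is $W$-invariant and that $\mathcal{L}^\bot$ is $W$-invariant as well. For $\mathcal{L}$: since $W = S(cd^*d - 1_A)$, applying $W$ to $d^*f$ gives $S(cd^*dd^*f - d^*f) = S(cd^*f - d^*f) = (c-1)Sd^*f$ using $dd^* = 1_V$, which lies in $\mathcal{L}$; applying $W$ to $Sd^*g$ gives $S(cd^*dSd^*g - Sd^*g) = cSd^*(dSd^*g) - d^*g \in \mathcal{L}$ since $S^2 = 1_A$. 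For $\mathcal{L}^\bot$: one checks $\mathcal{L}^\bot = \ker(d) \cap \ker(dS) = \{\psi : d\psi = 0, \ dS\psi = 0\}$ using the adjoint relations, and on this subspace $d^*d\psi = 0$, so $W\psi = -S\psi$; since $S$ preserves $\ker(d)\cap\ker(dS)$ (it just swaps the two conditions), invariance follows, and $W|_{\mathcal{L}^\bot} = -S|_{\mathcal{L}^\bot}$, whose spectrum is read off from the $\pm 1$-eigenspaces of $S$ intersected with $\ker d$, giving~(\ref{genesis}).

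The heart of the argument is the analysis of $W|_{\mathcal{L}}$. I would introduce the vertex-side operator $T = dSd^*$ on $C^0(G)$ and, for each eigenpair $Tf = \mu f$ with $f \in C^0(G)$, look for $W$-eigenvectors inside the (at most two-dimensional) subspace $\spann\{d^*f,\ Sd^*f\}$. Writing $\psi = \alpha\, d^*f + \beta\, Sd^*f$ and computing $W\psi$ using $dd^* = 1_V$, $dSd^*f = \mu f$, and $S^2 = 1_A$, one obtains that $W$ acts on the coordinate pair $(\alpha,\beta)$ by an explicit $2\times 2$ matrix whose entries involve $c$ and $\mu$; its characteristic equation, after clearing denominators, is exactly $\lambda^2 - c\mu\lambda + (cc'-1) = 0$, i.e. $\mu = \varphi(\lambda)$ with $\varphi(x) = (x + (cc'-1)x^{-1})/c$. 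This yields the correspondence $\sigma(W|_{\mathcal L}) \supseteq \varphi^{-1}(\sigma(T))$ together with the eigenfunction formula $\psi_\lambda = (d^* - \lambda Sd^*)f_{\varphi(\lambda)}$ in the generic case. The technical hypothesis $cc' \neq 1$ guarantees $\lambda = 0$ is never a root, so $\varphi^{-1}$ is well-defined; note also $\sigma(dSd^*)\setminus\{0\} = \sigma(d^*Sd)\setminus\{0\}$, which is why $\sigma(d^*Sd)$ appears on the right-hand side of~(\ref{specmap}).

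The delicate point—and the main obstacle—is the \emph{birth and death} of eigenvalues at the exceptional values $\mu = \pm c'$, which produce the correction terms $\{\pm(cc'-1)\}$ in~(\ref{specmap}). When $\mu = c'$, the vector $d^*f - Sd^*f$ (or $d^*f + Sd^*f$ when $\mu = -c'$) either lies in $\mathcal{L}^\bot$ or becomes degenerate, so the naive two-dimensional count overcounts $\mathcal{L}$ and one must carefully track which combination survives in $\mathcal{L}$ versus $\mathcal{L}^\bot$; this is exactly where the case distinction $\pm c' \in \sigma(dSd^*)$ enters. I would handle this by a dimension count: comparing $\dimm \mathcal{L}$ with $\sum_{\mu}(\text{multiplicity contribution})$ forces the removal of $\{\pm c'\}$ from $\varphi^{-1}(\sigma(d^*Sd))$ and the insertion of the single eigenvalue $cc'-1$ (resp. $-(cc'-1)$), whose eigenfunction is $d^*f_{c'}$ (resp. $d^*f_{-c'}$) as recorded in~(\ref{eigenfunc}). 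Verifying directly that $W(d^*f_{\pm c'}) = \pm(cc'-1)\,d^*f_{\pm c'}$ when $Tf_{\pm c'} = \pm c' f_{\pm c'}$ is a short computation using $dd^* = 1_V$ and $dSd^* f_{\pm c'} = \pm c' f_{\pm c'}$, and closes the argument once the bookkeeping of multiplicities is in place.
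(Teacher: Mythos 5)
Your overall strategy --- restrict $W$ to $\mathcal{L}$, reduce to a $2\times 2$ action on $\spann\{d^*f,\,Sd^*f\}$ for each eigenfunction $f$ of $dSd^*$, and treat $\mathcal{L}^\bot$ via $W|_{\mathcal{L}^\bot}=-S$ --- is the same as the paper's. But two steps do not go through as written. First, your computation rests on $dd^*=1_V$, whereas in this weighted setting the constancy assumption on $c'$ gives $dd^*=c'1_V$ (this is exactly where $cc'-1$ comes from; in Example~3 one has $c'=\kappa\neq 1$). With $dd^*=1_V$ your own step yields $Wd^*f=(c-1)Sd^*f$, and then the $2\times 2$ matrix on $(d^*f,Sd^*f)$ has characteristic polynomial $\lambda^2-c\mu\lambda+(c-1)$, not the $\lambda^2-c\mu\lambda+(cc'-1)$ you assert; the two agree only when $c'=1$, so as written the reduction does not produce the lemma's $\varphi$. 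The repair is to use $Wd^*=(cc'-1)Sd^*$ and $WSd^*=cSd^*(dSd^*)-d^*$.

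Second, and more seriously, the analysis at the exceptional values $\mu=\pm c'$ --- the actual content of Eq.~(\ref{specmap}) --- is missing its key ingredient. What is needed (and what the paper proves via the projection $\Pi=(1/c')d^*d$ and the equality case of a norm estimate, using self-adjointness of $dSd^*$ and that $S$ is an isometric involution) is: every eigenvalue $\nu$ of $dSd^*$ satisfies $|\nu|\leq c'$, and $\nu=\pm c'$ holds if and only if the eigenfunction satisfies $Sd^*f=\pm d^*f$, i.e.\ $d^*f\in\mathcal{H}_\pm$. Without this equivalence: (i) your claim that at $\mu=c'$ the vector $d^*f-Sd^*f$ ``lies in $\mathcal{L}^\bot$ or becomes degenerate'' is unsubstantiated (in fact it is the zero vector, precisely because $Sd^*f=d^*f$); (ii) the verification $Wd^*f_{\pm c'}=\pm(cc'-1)d^*f_{\pm c'}$ is \emph{not} a consequence of $dd^*$ and $dSd^*f=\pm c'f$ alone --- one gets $Wd^*f=(cc'-1)Sd^*f$, and proportionality to $d^*f$ requires exactly the missing fact $Sd^*f=\pm d^*f$; (iii) the removal of $\pm c'$ and the insertion of $\pm(cc'-1)$, as well as the dimension count you invoke, require knowing for which $\lambda$ the candidate eigenvector $(1_A-\lambda S)d^*f$ vanishes, which is again this equivalence (since $S^2=1_A$ and $d^*$ is injective, it vanishes only if $\lambda=\pm1$ and $d^*f\in\mathcal{H}_\pm$). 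Finally, a small point: you only prove $W(\mathcal{L})\subseteq\mathcal{L}$; the claimed equality uses $cc'\neq 1$, e.g.\ via the paper's explicit preimage $W\bigl(d^*(c(dSd^*)f+g)-Sd^*f\bigr)=(cc'-1)(d^*f+Sd^*g)$, or by noting that $cd^*d-1_A$ has eigenvalues $cc'-1$ and $-1$, hence $W$ is invertible.
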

\begin{proof}
From the properties of $d$ and $S$, it holds that for any $n\in \mathbb{N}$, 
	\begin{equation}\label{Eq:close}
	\begin{bmatrix} W d^* & W Sd^* \end{bmatrix}
        	= \begin{bmatrix} d^* & Sd^* \end{bmatrix} \widetilde{T}. 
	\end{equation}
Here we put 
	\[ \widetilde{T}=\begin{bmatrix} 0 & -1_V \\ (cc'-1)1_V & cdSd^* \end{bmatrix}\]
By multiplying arbitrary ${}^T[f_1, f_2]$ to both sides of Eq.~(\ref{Eq:close}), we have $W(\mathcal{L})\subset \mathcal{L}$. 
For any $h\in \mathcal{L}$, there exist $f,g\in C^0(G)$ such that $h=d^*f+Sd^*g$. By Eq.~(\ref{Eq:close}) 
	\[ W(d^* (c(dSd^*)f+g) - Sd^* f)=(cc'-1)h, \]
which implies $W(\mathcal{L})\supset \mathcal{L}$ since $cc'\neq 1$. Thus $W(\mathcal{L})= \mathcal{L}$.  
From now on, we choose a pair of nonzero functions $(f,g)\in C^0(G)\times C^0(G)$ so that 
	\begin{equation}\label{Eq:f1f2} 
        	\widetilde{T} \begin{bmatrix} f \\ g \end{bmatrix}
        	=\lambda \begin{bmatrix} f \\ g \end{bmatrix}. 
        \end{equation}
Eq.~(\ref{Eq:f1f2}) is equivalent to 
	\begin{align}
        g &= -\lambda f \label{Eq:2}\\
        \left(\varphi(\lambda)-dSd^*\right)f & = 0 \label{Eq:1}
        \end{align}
From Eqs.~(\ref{Eq:f1f2}) and (\ref{Eq:2}), we have 
	\begin{equation}\label{Eq:vec}
        W(1_A-\lambda S)d^*f=\lambda (1_A-\lambda S)d^*f
        \end{equation} 
By the way, let $\nu\in \mathbb{R}$ and $f\in C^0$ be an eigenvalue and its eigenfunction $dSd^*$, which is a self adjoint operator. 
Set $\Pi=(1/c')d^*d$. $\Pi$ is a projection onto $d^*(C^0(G))$.  Taking $d^*f=\psi$, we have 
	\[ |\nu|^2 ||f||^2 = ||dSd^*f||^2=||  d\Pi S \Pi \psi ||^2 = c'|| \Pi S \Pi \psi ||^2 \leq c'|| \Pi\psi ||^2 = (c')^2 ||f||^2.  \]
Thus $\nu\leq c'$. The fourth equality holds if and only if $Sd^*f=d^*f$ or $Sd^*f=-d^*f$, 
which implies that 
	\[ \nu=\pm c' \mathrm{ \; if \; and \; only \; if \; } d^*f\in \mathcal{H}_\pm \] 
Now from Eq.~(\ref{Eq:vec}), we consider the following two cases. 
\begin{enumerate}
\item $(1_A-\lambda S)d^*f=0$ case. From the above observation, this situation is equivalent to ``$\lambda=1$ and $c'\in \sigma(T)$" or ``$\lambda=-1$ and $-c'\in \sigma(T)$", and $f\in \mathrm{ker}(dSd^*\pm c')$. 
Remark that $Wd^*f=\pm (cc'-1)d^*f$ and $\varphi^{-1}(\pm c')= \{\pm 1, \pm (cc'-1)\}$. 
So we have if $\pm c'\in \sigma(dSd^*)$, then $\pm (cc'-1)\in \varphi^{-1}(\pm c')\setminus \{\pm 1\}\subset \sigma(W|_\mathcal{L})$. 

\item $(1_A-\lambda S)d^*f\neq 0$ case. 
This situation implies $\lambda\in \sigma(U)$. Equation~(\ref{Eq:1}) implies that 
	\begin{equation}\label{Eq:val} 
        \lambda\in \varphi^{-1}(\sigma (d^*Sd)\setminus \{\pm c'\}). 
        \end{equation}
In this case, the eigenfunction $(1_A-\lambda S)d^*f\notin \mathcal{H}_\pm$ but $(1_A-\lambda S)d^*f \in \mathcal{H}_+ + \mathcal{H}_-$. 
\end{enumerate}
$\mathrm{dim}(\mathcal{H}_+\cap \mathrm{L})=|V|$
Combining cases (1) and (2), we have Eq.~(\ref{specmap}) and Eq.~(\ref{eigenfunc}).

In the following, we show Eq.~(\ref{genesis}). Since $\mathcal{L}^\bot=\mathrm{ker}(d)\cap \mathrm{ker}(dS)$, we have  
	\begin{equation}
        W(\psi+S\psi)=-(\psi+S\psi)\in \mathcal{H}_- \mathrm{\;and\;} W(\psi-S\psi)=\psi+S\psi\in \mathcal{H}_+. 
        \end{equation}
We set $\mathcal{C}_\pm=\mathcal{L}^\bot \cap \mathcal{H}_{\mp}$. 
We have  
	\begin{equation}\label{Eq:gen} W\psi_\pm=\pm \psi_\pm,\;\;\mathrm{for\;any\;} \psi_\pm\in \mathcal{C}_\pm. \end{equation} 
For $\psi\in \mathcal{H}_\mp \cap \mathrm{ker}(d)$, $dS\psi=\pm d\psi=0$ holds which implies $\mathcal{H}_\mp \cap \mathrm{ker}(d)=\mathcal{C}_\pm$. 
It is completed the proof
\end{proof}

\subsection{Example 1: Szegedy walk}
We consider the spectral map of the Szegedy walk treated in Sect.~2. 
The parameter $c$ and functions $m_A$, $m_V$ and $w$ are in the following table: 
\begin{center}
\begin{tabular}{|c||c|c|}\hline
    & $U_1$ & $U_2$ \\ \hline\hline
$c$ & $2$ & $2$ \\ \hline
$w$ & $\sum_{e:o(e)=u}|w_1(e)|^2=1$ & $\sum_{e:o(e)=u}w_2(e)=1$ and $w_2>0$ \\ \hline
$m_A$ and $m_V$ & $m_A(e)=1$, $m_V(u)=1$ & $m_A(e)=m_V(t(e))w(\bar{e})=m_V(o(e))w(e)$ \\ \hline
\end{tabular}
\end{center}
Let a self adjoint operator on $C_0(G)$ be 
	\[ (Tf)(u)=\sum_{e:o(e)=u}w_1(e)\overline{w_1(\bar{e})} f(t(e)) \] 
and a Laplacian operator be 
	\[ (Lf)(u)=\sum_{e:o(e)=u}w_2(e) f(t(e))-f(u). \]
\begin{proposition}
Put $\varphi_1(x)=(x+x^{-1})/2$, $\varphi_2(x)=(x+x^{-1})/2-1$. 
\begin{align}
\sigma(U_1) &= 
	\begin{cases}
        \varphi^{-1}_1(\sigma(T)) \cup \{\pm 1\} & \text{: $|E|-|V|+\bs{1}_{\{ 1\in \sigma(T) \}}>0$, $|E|-|V|+\bs{1}_{\{ -1 \in \sigma(T) \}}>0$ } \\
        \varphi^{-1}_1(\sigma(T)) \cup \{1\} & \text{: $|E|-|V|+\bs{1}_{\{ 1\in \sigma(T) \}}>0$, $|E|-|V|+\bs{1}_{\{ -1 \in \sigma(T) \}}\leq 0$ } \\
        \varphi^{-1}_1(\sigma(T)) \cup \{-1\} & \text{: $|E|-|V|+\bs{1}_{\{ 1\in \sigma(T) \}}\leq 0$, $|E|-|V|+\bs{1}_{\{ -1 \in \sigma(T) \}}>0$ } \\
        \varphi^{-1}_1(\sigma(T))  & \text{: $|E|-|V|+\bs{1}_{\{ 1\in \sigma(T) \}}\leq 0$, $|E|-|V|+\bs{1}_{\{ -1 \in \sigma(T) \}}\leq 0$ } 
	\end{cases} \label{Eq:Spec1} \\
\sigma(U_2) &= 
	\begin{cases}
        \varphi^{-1}_2(\sigma(L))  & \text{: $G$ is a tree} \\
        \varphi^{-1}_2(\sigma(L)) \cup \{1\} & \text{: $G$ has just one cycle and not bipartite} \\
        \varphi^{-1}_2(\sigma(L)) \cup \{1\} \cup \{-1\} & \text{: otherwise} 
        \end{cases}\label{Eq:Spec2}
\end{align}

\end{proposition}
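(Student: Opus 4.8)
The plan is to apply Lemma~\ref{keylemma} directly with the concrete data from the table for each of $U_1$ and $U_2$, and then to interpret the abstract spectral conditions (membership of $\pm c'$ in $\sigma(dSd^*)$, and nontriviality of $\mathcal{H}_\pm\cap\mathrm{ker}(d)$) in graph-theoretic terms. For $U_1$ we have $m_A\equiv m_V\equiv 1$, so $c'=\sum_{e:t(e)=u}|w_1(\bar e)|^2=\sum_{e:o(e)=u}|w_1(e)|^2=1$ by setting 1(a); hence $\varphi(x)=(x+(cc'-1)x^{-1})/c=(x+x^{-1})/2=\varphi_1(x)$ since $c=2$. One then checks $d_1Sd_1^*=T$ on $C^0(G)$ directly from the formulas $(d_1\psi)(u)=\sum_{e:t(e)=u}\psi(e)\overline{w_1(\bar e)}$ and $(d_1^*f)(e)=f(t(e))w_1(\bar e)$, giving $(d_1Sd_1^*f)(u)=\sum_{e:t(e)=u}\overline{w_1(\bar e)}\,w_1(e)f(o(e))=\sum_{e':o(e')=u}w_1(e')\overline{w_1(\bar{e'})}f(t(e'))=(Tf)(u)$. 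Likewise for $U_2$, the extended detailed balance condition gives $c'=\sum_{e:t(e)=u}\frac{m_V(t(e))w(\bar e)}{m_A(e)}=\sum_{e:t(e)=u}\frac{m_A(e)}{m_A(e)}=\sum_{e:t(e)=u}w(\bar e)=\sum_{e':o(e')=u}w(e')=1$, so again $\varphi=\varphi_2$ matches once we note $c=2$ here yields $(x+x^{-1})/2$, but the table lists $\varphi_2(x)=(x+x^{-1})/2-1$; so in fact one must use the \emph{shifted} identification $d_2Sd_2^*=L+1_V$ (i.e. $d_2Sd_2^*f=\sum_{e:o(e)=u}w_2(e)f(t(e))=(L+1)f$), whence $\varphi^{-1}(\sigma(d_2Sd_2^*))=\varphi^{-1}(\sigma(L)+1)$, and absorbing the $+1$ into the map produces exactly $\varphi_2^{-1}(\sigma(L))$. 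I would spell this bookkeeping out carefully since it is the one genuinely error-prone point.

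Next I would handle the $\mathcal{L}^\bot$ part. By Lemma~\ref{keylemma}, $\sigma(W|_{\mathcal{L}^\bot})$ is governed by whether $\mathcal{H}_\pm\cap\mathrm{ker}(d)\neq\{\bs 0\}$. The key observation is a dimension count: $\mathrm{ker}(d)$ has dimension $2|E|-|V|+\dimm(\mathrm{coker}(d))$, and since $dd^*=1_V$ the map $d$ is surjective, so $\dimm\mathrm{ker}(d)=2|E|-|V|$. Splitting $\mathrm{ker}(d)=(\mathcal{H}_+\cap\mathrm{ker}(d))\oplus(\mathcal{H}_-\cap\mathrm{ker}(d))$ (the decomposition into $S$-symmetric and $S$-antisymmetric parts restricts to $\mathrm{ker}(d)$ because $d\psi=0\Rightarrow dS\psi=\pm d\psi=0$ on each summand — wait, this needs $S\psi\in\mathrm{ker}(d)$, which does not hold in general; instead I would directly use that $\mathcal{C}_\pm=\mathcal{H}_\mp\cap\mathrm{ker}(d)$ from the lemma's proof and compute $\dimm(\mathcal{H}_+\cap\mathrm{ker}(d))$ by a rank argument on $d$ restricted to $\mathcal{H}_+$ and $\mathcal{H}_-$ separately). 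Concretely, on $\mathcal{H}_+$ (dimension $|E|$ when there are no loops) the operator $d$ restricted there has image of dimension $|V|$ minus the number of bipartite-type obstructions, while on $\mathcal{H}_-$ (dimension $|E|$) it has image of dimension $|V|-1$; this yields $\dimm(\mathcal{H}_+\cap\mathrm{ker} d)=|E|-|V|+\bs 1_{\{1\in\sigma(T)\}}$ type formulas. For the Szegedy-$U_1$ case the correction terms are precisely the indicators $\bs 1_{\{\pm1\in\sigma(T)\}}$ appearing in \eqref{Eq:Spec1}, because $\pm1\in\sigma(d_1Sd_1^*)=\sigma(T)$ exactly when the map $d$ has a ``resonance'' on $\mathcal{H}_\pm$ that drops its rank by one.

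For $U_2$ I would translate the same dimension counts into the stated trichotomy. Here the relevant facts are: $\mathcal{H}_-\cap\mathrm{ker}(d_2)\neq\{\bs 0\}$ iff $|E|-|V|+1>0$, i.e. iff $G$ has at least one independent cycle (fails iff $G$ is a tree); and $\mathcal{H}_+\cap\mathrm{ker}(d_2)\neq\{\bs 0\}$ iff $|E|-|V|+\bs 1_{\{-1\in\sigma(L)\}}>0$, where $-1\in\sigma(L)$ (equivalently $0\in\sigma(d_2Sd_2^*)$ shifted, i.e. $L+1$ singular, i.e. $2\in\sigma$ of the transition operator) happens iff $G$ is bipartite. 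Thus: tree $\Rightarrow$ both intersections trivial $\Rightarrow$ no extra eigenvalues; exactly one independent cycle and non-bipartite $\Rightarrow$ only $\mathcal{H}_-\cap\mathrm{ker}=\{\bs 0\}$ fails... I would be careful to get the $\pm$ bookkeeping right here, matching the lemma's case split: $\mathcal{H}_-\cap\mathrm{ker}(d)\neq\{\bs 0\}$ contributes eigenvalue $+1$ and $\mathcal{H}_+\cap\mathrm{ker}(d)\neq\{\bs 0\}$ contributes $-1$, which reproduces \eqref{Eq:Spec2}: tree gives nothing extra; one cycle and non-bipartite gives $\{1\}$; otherwise (at least two independent cycles, or bipartite with a cycle) gives $\{1,-1\}$.

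The main obstacle I expect is the $\mathcal{L}^\bot$ dimension bookkeeping — precisely pinning down when $\dimm(\mathcal{H}_\pm\cap\mathrm{ker}(d))>0$ and matching the resulting indicator functions to the spectral conditions $\pm c'\in\sigma(dSd^*)$, since this requires relating the rank drop of $d|_{\mathcal{H}_\pm}$ to the eigenvalues $\pm c'$ of $d S d^*$ (via the sharp case of the Cauchy--Schwarz estimate $\nu\le c'$ in the lemma's proof) and, for $U_2$, to the bipartiteness/cycle structure of $G$. Everything else (the identifications $d_1Sd_1^*=T$, $d_2Sd_2^*=L+1$, the values $c'=1$, and plugging into \eqref{specmap}--\eqref{genesis}) is routine substitution.
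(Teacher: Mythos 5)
Your overall strategy coincides with the paper's: substitute the table data into Lemma~\ref{keylemma} (with $c=2$, $c'=1$), identify $d_1Sd_1^*=T$ and $d_2Sd_2^*=L+1_V$ so that the preimage of $\sigma(L+1_V)$ under $(x+x^{-1})/2$ is exactly $\varphi_2^{-1}(\sigma(L))$, and then settle the $\mathcal{L}^\bot$ contribution by deciding when $\mathcal{H}_\mp\cap\mathrm{ker}(d)\neq\{\bs{0}\}$. Those identifications are fine (your change of variables actually produces the entrywise conjugate of $T$, which is harmless for the spectrum), and your final matching of cases to Eqs.~(\ref{Eq:Spec1})--(\ref{Eq:Spec2}) is the right one.

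The genuine gap is precisely the step you postpone as ``the main obstacle'': the quantitative statement $\dimm\bigl(\mathcal{H}_\mp\cap\mathrm{ker}(d)\bigr)=|E|-|V|+m_{\pm 1}$, where $m_{\pm1}$ is the multiplicity of $\pm1$ in $\sigma(dSd^*)$. As written, your rank assertions are partly circular and partly mis-stated: you pair $\mathcal{H}_+\cap\mathrm{ker}(d)$ with $\bs{1}_{\{1\in\sigma(T)\}}$, whereas by the lemma $\mathcal{C}_-=\mathcal{H}_+\cap\mathrm{ker}(d)$ carries $m_{-1}$ (it is $\mathcal{C}_+=\mathcal{H}_-\cap\mathrm{ker}(d)$, responsible for the eigenvalue $+1$, that carries $m_{+1}$); you assert unconditionally that $d$ restricted to $\mathcal{H}_-$ has rank $|V|-1$, which presupposes $1\in\sigma(T)$ --- true in the stochastic $U_2$ setting but not automatic for $U_1$ with general $w_1$, which is exactly why Eq.~(\ref{Eq:Spec1}) retains the indicator $\bs{1}_{\{1\in\sigma(T)\}}$; and for $U_2$ the bipartiteness criterion is $-1\in\sigma(P)$, i.e.\ $-2\in\sigma(L)$, not ``$-1\in\sigma(L)$'' or ``$L+1_V$ singular''. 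The mechanism you invoke (the equality case $Sd^*f=\pm d^*f\Leftrightarrow dSd^*f=\pm c'f$ of the Cauchy--Schwarz bound in the lemma's proof) can indeed be upgraded to the needed count, e.g.\ by noting that the orthogonal complement of $\mathcal{H}_\mp\cap\mathrm{ker}(d)$ inside $\mathcal{H}_\mp$ is $(1\mp S)d^*(C^0(G))$, whose dimension is $|V|-m_{\pm1}$ by the injectivity of $d^*$ and the equality case; but you must actually carry this out, since it is the only non-routine content of the proposition. The paper closes the same gap differently: from the explicit eigenfunctions in Eq.~(\ref{eigenfunc}) it forms the combinations $\psi_\lambda+\lambda\psi_{\bar\lambda}\in\mathcal{H}_-$ and $\psi_\lambda-\lambda\psi_{\bar\lambda}\in\mathcal{H}_+$, deduces $\dimm(\mathcal{L}\cap\mathcal{H}_+)=|V|-m_{-1}$ and $\dimm(\mathcal{L}\cap\mathcal{H}_-)=|V|-m_{1}$, hence $\dimm(\mathcal{C}_\pm)=|E|-|V|+m_{\pm1}$, and then applies the tree/one-cycle/multi-cycle trichotomy for $|E|-|V|$ together with $m_1=1$ and $m_{-1}=\bs{1}_{\{G\;\mathrm{bipartite}\}}$ in the reversible $U_2$ case. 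So: same route, correct endgame, but the central dimension count is left as a plan containing sign and criterion slips rather than a proof.
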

\begin{proof}
By a simple observation, we can see that 
	\begin{align}\label{Eq:RW}
        d_1^*Sd_1 &= T,\;\;d_2^*Sd_2 = L+1_V. 
        \end{align}
We should remark that $L+1_V={}^TP$, where $P$ is a transition matrix of random walk: $(Pf)(u)=\sum_{e:t(e)=u}w_2(e)f(e)$. 
In the setting of $U_2$, since $w_2(\cdot)$ is reversible, then $\sigma({}^TP)=\sigma(P)$ holds, which implies 
$\sigma({}^TP)=\sigma(T)$ when $w_2=w_1^2$. From the reversibility of the stochastic process described by transition matrix $P$, 
it holds that $m_1=1$ and $m_{-1}=\bs{1}_{\{ \mathrm{G\;is\;bipartite}\}}$. 
From now on, we consider the dimensions of $\mathcal{C}_\pm$. 
By Eq.~(\ref{eigenfunc}) and $\varphi^{-1}(\nu)\in \{ \lambda, \bar{\lambda} \}$ with $\nu=\mathrm{Re}(\lambda)$, we observe
\begin{align*}
	\psi_\lambda+\lambda\psi_{\bar{\lambda}} \in
        	\begin{cases}
                \mathcal{H}_-\setminus \{\bs{0}\} & \text{: $\nu \neq  -1$} \\
                \{\bs{0}\} & \text{: $\nu =-1$}
                \end{cases}\\
        \psi_\lambda-\lambda\psi_{\bar{\lambda}}\in  
        	\begin{cases}
                \mathcal{H}_+\setminus \{\bs{0}\} & \text{: $\nu \neq 1$} \\
                \{\bs{0}\} & \text{: $\nu = 1$}
                \end{cases}
\end{align*}
Therefore
	\begin{equation}\label{Eq:dim}
        \mathrm{dim}(\mathcal{L}\cap \mathcal{H}_+)=|V|-m_{-1} \mathrm{\;and\;} \mathrm{dim}(\mathcal{L}\cap \mathcal{H}_-)=|V|-m_{1}.
        \end{equation}
Remarking $\mathrm{dim}(\mathcal{H}_\pm)=|E|$, by Eq.~(\ref{Eq:dim}),
	\begin{align}
        \mathrm{dim}(\mathcal{C}_+) &= \mathrm{dim}(\mathcal{H}_-)-\mathrm{dim}(\mathcal{L}\cap \mathcal{H}_-)=|E|-|V|+m_{1} \label{Eq:dC+}\\
        \mathrm{dim}(\mathcal{C}_-) &= \mathrm{dim}(\mathcal{H}_+)-\mathrm{dim}(\mathcal{L}\cap \mathcal{H}_+)=|E|-|V|+m_{-1} \label{Eq:dC-}
        \end{align}
We should remark that 
	\begin{equation}\label{Eq:tree} |E|-|V| \begin{cases}  =-1 & \text{: $G$ is a tree} \\ = 0 & \text{: $G$ has one cycle} \\ \geq 1  & \text{: $G$ has at least $2$ cycles}\end{cases} \end{equation}
Combining Eqs.~(\ref{Eq:dC+}), (\ref{Eq:dC-}) and Eq.~(\ref{Eq:gen}) with Eq.~(\ref{Eq:tree}) leads to Eq.~(\ref{genesis}). 

Then Lemma~\ref{keylemma} completes the proof. 
\end{proof}

\subsection{Example 2: Quantum graph walk}
Quantum graph is a system of linear Schr{\"o}diner equations on a metric graph with boundary conditions at each vertex. 
In the metric graph, each edge has a euclidean length, and there exists a one-dimensional plain wave on each edge. 
On $e\in A(G)$ whose euclidean length is $L_e=L_{\bar{e}}$, we determine $x\in [0,L_e]$ as the distance from $o(e)$. 
The plain wave on $e\in A(G)$ at $x$ is described by using some complex numbers $a_e,b_e$ as follows. 
	\begin{equation}\label{plain} 
        \phi_e(x)= a_e e^{ikx}+b_e e^{-ikx}, \;\; (x\in[0,L_e]) 
        \end{equation} 
Since the plain wave on each edge is independent of the arc direction, we should impose the following condition to $\phi_e$. 
	\begin{equation} \label{usiromae} 
        \phi_{\bar{e}}(x)=\phi_e(L_e-x)\;\;  e\in A(G)
        \end{equation}
Combining Eqs.~(\ref{plain}) and (\ref{usiromae}), $\phi_e(x)$ is rewritten by 
	\begin{equation}
        \phi_e(x)= a_e e^{ikx}+a_{\bar{e}} e^{ik(L-x)}. 
        \end{equation}
The boundary conditions at each vertex are as follows: 
	\begin{enumerate}
        \item For every $u\in V$ and for all $e_1,\dots,e_\kappa \in A(G)$ with $u=o(e_1)=o(e_2)$, there exists $\theta_u\in \mathbb{C}$ such that 
        \[ \phi_{e_1}(0)=\cdots=\phi_{e_\kappa}(0)=\theta_u. \] 
        \item For every $u\in V$, there exists $\alpha_u\in [0,\infty]$ such that 
        \[ \sum_{f:o(f)=u} \phi'_f(0)=\alpha_u \theta_u. \]
        \end{enumerate}
\begin{definition}
        Let $\psi_n\in C^1(G)$ $(n\in\mathbb{N})$ be 
        determined by the iterations of $\widetilde{U}$ i.e., $\psi_n=\widetilde{U}\psi_{n-1}$, where
	\[ (\widetilde{U}\psi)(e)= e^{ikL_e}\sum_{f:t(f)=o(e)} (\tau_{o(e)}-\delta_{f,\bar{e}}) \psi(f).  \]
	Here 
	\[ \tau_u=\frac{2}{\mathrm{deg}(u)+i\alpha_u/k}. \]
	The quantum graph walk is sequence of distributions $\{\mu_n\}_{n\in \mathbb{N}}$ obtained by 
        \[ \mu_n(e)=|\psi_n(e)|^2. \]
\end{definition}
Note that $\widetilde{U}$ is a unitary operator i.e., $\widetilde{U}\widetilde{U}^*=\widetilde{U}^*\widetilde{U}=1_A$. 
Set $\psi\in C^1(G)$ with $\psi(e)=a_e$. If $(a_e)_{e\in A}$ satisfies the boundary conditions (1) and (2) with $\psi\neq 0$, then we say that the quantum graph has non-trivial solution. 
\begin{proposition}\cite{HKSS:YMJ}
	The quantum graph walk has a non-trivial solution if and only if $1\in \sigma(\widetilde{U})$. 
\end{proposition}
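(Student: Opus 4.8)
The plan is to show the equivalence $1\in\sigma(\widetilde U)\iff$ the quantum graph admits a non-trivial solution by translating the two boundary conditions (1) and (2) at each vertex directly into the fixed-point equation $\widetilde U\psi=\psi$ with $\psi(e)=a_e$. First I would write out what $\widetilde U\psi=\psi$ means coordinatewise: for every arc $e$,
\[
a_e=e^{ikL_e}\sum_{f:t(f)=o(e)}\bigl(\tau_{o(e)}-\delta_{f,\bar e}\bigr)a_f.
\]
Introducing the quantity $\theta_u:=\sum_{f:t(f)=u}a_f-a_{\bar e_0}$ style sums, the right-hand side should collapse, after using $\tau_u=2/(\mathrm{deg}(u)+i\alpha_u/k)$, into the statement that the incoming amplitude at $o(e)$ has a common value determined by a single scalar per vertex. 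Concretely I expect the equation to reorganize as $a_e\,e^{-ikL_e}+a_{\bar e}=\tau_{o(e)}\sum_{f:o(f)=o(e)}a_f$, i.e. $\phi_e(L_e)=\tau_{o(e)}\sum_{f:o(f)=o(e)}\phi_f(0)$, which is exactly the scattering relation at the vertex $t(e)=o(\bar e)$.

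The core of the argument is then a dictionary lemma: setting $\theta_u=\sum_{f:o(f)=u}\phi_f(0)=\sum_{f:o(f)=u}a_f$ (using $\phi_f(0)=a_f+a_{\bar f}e^{ikL_f}$ and the relabelling $f\mapsto\bar f$), the pair of conditions ``$\phi_{e_1}(0)=\cdots=\phi_{e_\kappa}(0)$'' (continuity) together with ``$\sum_{f:o(f)=u}\phi_f'(0)=\alpha_u\theta_u$'' (the $\delta$-type derivative/Kirchhoff condition) is algebraically equivalent to the single family of identities
\[
\phi_f(0)=\tau_u\sum_{g:o(g)=u}\phi_g(0)\quad\text{for every }f\text{ with }o(f)=u,
\]
after eliminating the common value $\theta_u$ and plugging in $\tau_u$'s definition; the factor $\mathrm{deg}(u)$ comes from summing the continuity condition over the $\mathrm{deg}(u)$ arcs, and $i\alpha_u/k$ from the derivative condition via $\phi_f'(0)=ik(a_f-a_{\bar f}e^{ikL_f})$. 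Granting this lemma, a vector $(a_e)$ satisfying boundary conditions (1),(2) with $\psi\neq0$ is precisely a nonzero solution of $\widetilde U\psi=\psi$, and conversely; this is the whole statement. I would present the forward and backward directions as the two halves of this equivalence, each being just ``substitute and simplify.''

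The main obstacle — more bookkeeping than conceptual — is handling the degenerate cases cleanly, in particular $\alpha_u=\infty$ (Dirichlet: $\theta_u=0$, so $\tau_u=0$) and checking that the two scalar conditions at a vertex really do capture both (1) and (2) without losing information: one must verify that knowing $\phi_f(0)=\tau_u\theta_u$ for all incident $f$ forces both equality of the $\phi_f(0)$ \emph{and}, upon summing, the derivative condition, rather than only their conjunction being implied in one direction. A secondary point is to make sure the substitution $\psi(e)=a_e$ is consistent with the convention $\phi_e(x)=a_ee^{ikx}+a_{\bar e}e^{ik(L_e-x)}$ at both endpoints $x=0$ and $x=L_e$, so that the evolution $\widetilde U$ built from the factor $e^{ikL_e}$ and $\tau_{o(e)}$ genuinely encodes ``propagate along $e$, then scatter at $t(e)$.'' Once the arithmetic identity $\tau_u(\mathrm{deg}(u)+i\alpha_u/k)=2$ is used at the right moment, everything should close; no appeal to the spectral mapping Lemma~\ref{keylemma} is needed for this particular proposition, since it is a direct unitary-fixed-point computation, though one could alternatively deduce it from the structure $\widetilde U=S(\text{diag}\cdot(2\,\text{projection}-1))$ by relating $1\in\sigma(\widetilde U)$ to the kernel of an associated boundary operator.
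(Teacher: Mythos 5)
Your overall strategy is the right one: the paper itself gives no proof of this proposition (it is quoted from \cite{HKSS:YMJ}), and the standard argument is exactly the direct translation of the vertex conditions (1),(2) into the fixed-point equation for $\widetilde U$ that you propose. So there is nothing wrong with the plan; the problem is that the concrete identities on which you hang the argument are false as stated. Your ``dictionary lemma'' claims that (1) and (2) are equivalent to $\phi_f(0)=\tau_u\sum_{g:o(g)=u}\phi_g(0)$ for all $f$ with $o(f)=u$. Summing this over the $\mathrm{deg}(u)$ arcs $f$ at $u$ gives $\bigl(1-\tau_u\,\mathrm{deg}(u)\bigr)\sum_{g}\phi_g(0)=0$, and since $\tau_u\,\mathrm{deg}(u)=2\,\mathrm{deg}(u)/(\mathrm{deg}(u)+i\alpha_u/k)\neq 1$ for real $\alpha_u/k$, your identity forces $\phi_f(0)=0$ for every $f$, i.e.\ the Dirichlet condition, not the $\delta$-coupling. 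Similarly, $\sum_{f:o(f)=u}\phi_f(0)$ is not $\sum_{f:o(f)=u}a_f$ but $\sum_{f:o(f)=u}a_f+\sum_{g:t(g)=u}a_ge^{ikL_g}$, and in your reorganized fixed-point equation the sum should run over $f$ with $t(f)=o(e)$ (incoming arcs), while the left-hand side $a_ee^{-ikL_e}+a_{\bar e}$ is not $\phi_e(L_e)=a_ee^{ikL_e}+a_{\bar e}$.

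The correct dictionary must separate outgoing from incoming amplitudes at $u$: put $b_f:=a_{\bar f}e^{ikL_f}$ for $o(f)=u$. Continuity gives $a_f+b_f=\theta_u$ for all such $f$, and the derivative condition, via $\phi_f'(0)=ik(a_f-b_f)$, gives $\sum_f(a_f-b_f)=-i(\alpha_u/k)\,\theta_u$; adding and subtracting yields $2\sum_f b_f=(\mathrm{deg}(u)+i\alpha_u/k)\,\theta_u$, i.e.\ $\theta_u=\tau_u\sum_f b_f$, and hence
\begin{equation*}
a_e=\sum_{g:\,t(g)=o(e)}\bigl(\tau_{o(e)}-\delta_{g,\bar e}\bigr)a_g\,e^{ikL_g},
\end{equation*}
with the converse obtained by reversing these steps (defining $\theta_u:=\tau_u\sum_f b_f$ and recovering (1) and (2)). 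Note the phase sits on the \emph{incoming} arc here, whereas $\widetilde U$ carries $e^{ikL_e}$ on the outgoing arc; so it is $Da$ with $D=\mathrm{diag}(e^{ikL_e})$, not $a$ itself, that satisfies $\widetilde U(Da)=Da$, and since $D$ is invertible this still gives the equivalence ``non-trivial solution $\iff 1\in\sigma(\widetilde U)$'' (for equal lengths the distinction disappears). You correctly flag the $\alpha_u=\infty$ case and the endpoint-convention issue as points to check, but as written the pivotal lemma and the intermediate identities would not survive the ``substitute and simplify'' step, so the proof does not close in its current form.
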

From now on we restrict ourselves to consider the following simple case to make $\widetilde{U}$ be able to apply the spectral map of Lemma~\ref{keylemma}. 
\begin{lemma}
        If $L_e=L$ for any $e\in A$, $\mathrm{deg}(u)=\kappa$ and $\alpha_u=\alpha$ for any $u\in V$, then we have 
        \begin{equation}
        \widetilde{U}=e^{ikL}S(cd^*d-1_A), 
        \end{equation}
        where \[ c=\frac{2\kappa}{\kappa+iq(k)},\;w(e)=\frac{1}{\sqrt{\kappa}},\;m_V(u)=1,\;m_A(e)=1.  \]
        Here we put $q(k)=\alpha/k$. 
\end{lemma}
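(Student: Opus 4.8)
The plan is to prove the identity by direct substitution: under the stated homogeneity hypotheses every vertex‑dependent quantity in the definition of $\widetilde U$ collapses to a constant, and what is left is a matching of index conventions. First I would note that $L_e=L$ for all $e\in A$ turns the prefactor $e^{ikL_e}$ into the global scalar $e^{ikL}$, while $\mathrm{deg}(u)=\kappa$ and $\alpha_u=\alpha$ for all $u\in V$ turn $\tau_u$ into the single constant $\tau=2/(\kappa+iq(k))$ with $q(k)=\alpha/k$. Since moreover $\delta_{f,\bar e}$ picks out exactly the arc $f=\bar e$ among those with $t(f)=o(e)$ (because $t(\bar e)=o(e)$), this gives $(\widetilde U\psi)(e)=e^{ikL}\big(\tau\sum_{f:t(f)=o(e)}\psi(f)-\psi(\bar e)\big)$. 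Observing that $c=\kappa\tau$, it remains to check that the operator inside the bracket equals $S(cd^*d-1_A)$ for the weights $w\equiv 1/\sqrt\kappa$, $m_V\equiv m_A\equiv 1$.

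Next I would compute $d$ and $d^*$ for these weights. Since $w$ is real and constant, $\overline{w(\bar e)}=1/\sqrt\kappa$, so Eq.~(\ref{Eq:d}) gives $(d\psi)(u)=\kappa^{-1/2}\sum_{e:t(e)=u}\psi(e)$, and Eq.~(\ref{Eq:d*}) gives $(d^*f)(e)=\kappa^{-1/2}f(t(e))$ because $m_V(t(e))=m_A(e)=1$. Composing these, $(d^*d\psi)(e)=\kappa^{-1}\sum_{f:t(f)=t(e)}\psi(f)$, hence $((cd^*d-1_A)\psi)(e)=(c/\kappa)\sum_{f:t(f)=t(e)}\psi(f)-\psi(e)$. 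Applying $S$ and using $t(\bar e)=o(e)$ once more, $(S(cd^*d-1_A)\psi)(e)=(c/\kappa)\sum_{f:t(f)=o(e)}\psi(f)-\psi(\bar e)$, which is precisely the bracketed operator above after the substitution $c/\kappa=\tau$. This establishes $\widetilde U=e^{ikL}S(cd^*d-1_A)$.

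To make the identification usable for the spectral map, I would also record the two facts that the hypotheses imposed on $W$ before Lemma~\ref{keylemma} require, both immediate from the weights: $c'=\sum_{e:t(e)=u}|w(\bar e)|^2=\mathrm{deg}(u)/\kappa=1$ is independent of $u$, so $dd^*=1_V$ holds; and $cc'=c=2\kappa/(\kappa+iq(k))\ne 1$, since for real $q(k)$ the number $c$ is either $2$ (when $q(k)=0$) or has nonzero imaginary part, in neither case equal to $1$. I do not expect a genuine obstacle here: the whole argument is a bookkeeping exercise, and the only points that demand care are the direction conventions $o(\bar e)=t(e)$, $t(\bar e)=o(e)$, the appearance of $\overline{w(\bar e)}$ in $d$ (harmless since $w$ is real and positive), and the verification that the Kronecker term $\delta_{f,\bar e}$ in $\widetilde U$ is exactly the $-\psi(\bar e)$ contributed by the $-1_A$ inside $W$.
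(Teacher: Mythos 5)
Your direct-substitution computation is correct: with $w\equiv 1/\sqrt{\kappa}$ and $m_V\equiv m_A\equiv 1$ one indeed gets $(S(cd^*d-1_A)\psi)(e)=(c/\kappa)\sum_{f:t(f)=o(e)}\psi(f)-\psi(\bar e)$, which matches $e^{-ikL}(\widetilde U\psi)(e)$ since $c/\kappa=\tau$. The paper states this lemma without proof, and your argument is exactly the routine verification it leaves to the reader (including the side checks $c'=1$, $dd^*=1_V$ and $cc'\neq 1$), so there is nothing to add.
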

Remark that in this setting the boundary operator $d$ is given by 
	\[ (d\psi)(u)=\frac{1}{\sqrt{\kappa}}\sum_{e:t(e)=u}\psi(e). \]
We can easily check that $dd^*=1_V$ which implies $c'=1$. 
\begin{proposition}Let $P$ be a stochastic transition operator of the simple random walk on $G(V,E)$. 
Then the spectrum of the quantum graph walk under the setting of $L_e=L$ for any $e\in A$, $\mathrm{deg}(u)=\kappa$ and $\alpha_u=\alpha$ for any $u\in V$, we have 
	\begin{multline}
        e^{-ikL}\sigma(\widetilde{U})=e^{i\mathrm{arg}(\tau)} \varphi_1^{-1}\left( \frac{\kappa}{\sqrt{\kappa^2+q^2(k)}}\sigma(P) \right) \\
        \cup 
        \begin{cases}
        \{\emptyset\} & \text{: ``$G$ is a tree" or ``$q(k)\neq 0$ and $G$ has at most one cycle".}\\ 
        \{1\} & \text{: ``$q(k)=0$ and $G$ has just one cycle whose length is odd".}\\
        \{\pm 1\} & \text{: otherwise.}
        \end{cases}
        \end{multline}
\end{proposition}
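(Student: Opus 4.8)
The plan is to recognise $\widetilde{U}$ as $e^{ikL}$ times the operator $W=S(cd^{*}d-1_{A})$ of Lemma~\ref{keylemma}, to identify the underlying vertex operator as the simple random walk, and then to turn the abstract description of Lemma~\ref{keylemma} into the explicit one by a change of variable. By the preceding Lemma, in the homogeneous case $\widetilde{U}=e^{ikL}W$ with $c=2\kappa/(\kappa+iq)=\kappa\tau$, $w\equiv1/\sqrt{\kappa}$, $m_V\equiv m_A\equiv1$; since $dd^{*}=1_V$ we have $c'=1$, so $cc'-1=c-1=(\kappa-iq)/(\kappa+iq)$ has modulus $1$ and $\mathrm{arg}(c-1)=2\mathrm{arg}(\tau)$. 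As $\widetilde{U}=e^{ikL}W$ it suffices to compute $\sigma(W)$, and a direct computation from $(d\psi)(u)=\kappa^{-1/2}\sum_{e:t(e)=u}\psi(e)$ and $(d^{*}f)(e)=\kappa^{-1/2}f(t(e))$ gives $(dSd^{*}f)(u)=\kappa^{-1}\sum_{v\sim u}f(v)=(Pf)(u)$, i.e. $dSd^{*}=P$; in particular $\sigma(dSd^{*})=\sigma(P)\subseteq[-1,1]$, with $1$ a simple eigenvalue ($G$ connected) and $-1$ an eigenvalue iff $G$ is bipartite.

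For the bulk of the spectrum, Lemma~\ref{keylemma} describes $\sigma(W|_{\mathcal L})$ through $\varphi^{-1}(\sigma(P))$ with $\varphi(x)=(x+(c-1)x^{-1})/c$. The key identity is
\[
\varphi^{-1}(\mu)=e^{i\mathrm{arg}(\tau)}\,\varphi_1^{-1}\!\Big(\tfrac{\kappa}{\sqrt{\kappa^{2}+q^{2}}}\,\mu\Big)\qquad(\mu\in\mathbb{R}),
\]
which I would prove by substituting $\lambda=\sqrt{c-1}\,\eta$ into $\varphi(\lambda)=\mu$, that is into $\lambda^{2}-c\mu\lambda+(c-1)=0$; dividing by $c-1$ turns this into $\eta^{2}-\tfrac{c\mu}{\sqrt{c-1}}\eta+1=0$, i.e. $\varphi_1(\eta)=\tfrac{c}{2\sqrt{c-1}}\mu$, and one checks $\sqrt{c-1}=e^{i\mathrm{arg}(\tau)}$ and $\tfrac{c}{2\sqrt{c-1}}=\tfrac{\kappa}{\sqrt{\kappa^{2}+q^{2}}}$ from $c=\tfrac{2\kappa}{\sqrt{\kappa^{2}+q^{2}}}e^{i\mathrm{arg}(\tau)}$. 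Applied to $\mu\in\sigma(P)$ this yields the main term $e^{i\mathrm{arg}(\tau)}\varphi_1^{-1}\big(\tfrac{\kappa}{\sqrt{\kappa^{2}+q^{2}}}\sigma(P)\big)$.

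It remains to account for the eigenvalues $\pm1$. By Lemma~\ref{keylemma}, $\sigma(W|_{\mathcal L})$ also contributes $\pm(c-1)$ when $\pm1\in\sigma(P)$; since $c-1\in\varphi^{-1}(1)$ and $-(c-1)\in\varphi^{-1}(-1)$ these lie in the main term except when $q=0$, where $c-1=1$ and they coincide with $\pm1$. The part $\mathcal L^{\perp}$ contributes $\pm1$ with eigenspaces $\mathcal C_{+}=\mathcal H_{-}\cap\mathrm{ker}(d)$ and $\mathcal C_{-}=\mathcal H_{+}\cap\mathrm{ker}(d)$; these depend only on $d$, which here is the Grover-walk boundary operator on the $\kappa$-regular graph $G$, so the dimension count of Example~1 (equivalently, $\mathrm{dim}\,\mathcal C_{+}$ and $\mathrm{dim}\,\mathcal C_{-}$ are the dimensions of the kernels of the signed and unsigned vertex--edge incidence operators of $G$) gives $\mathrm{dim}\,\mathcal C_{+}=|E|-|V|+1$ and $\mathrm{dim}\,\mathcal C_{-}=|E|-|V|+\bs{1}_{\{G\ \mathrm{bipartite}\}}$. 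Inserting $|E|-|V|=-1,0,\geq1$ for a tree, a unicyclic graph, and a graph with at least two cycles, together with ``a unicyclic graph is bipartite iff its cycle is even'' and the split $q=0$ versus $q\neq0$, one reads off the three alternatives in the statement, and $\sigma(W)=\sigma(W|_{\mathcal L})\cup\sigma(W|_{\mathcal L^{\perp}})$ gives the claim.

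The short computational core is the change of variable; the delicate step is the last one. An eigenvalue $1$ (resp.\ $-1$) can arise simultaneously from the branch value $\varphi^{-1}(1)\ni1$ (when $1\in\sigma(P)$), from the $\mathcal L$-exceptional value $c-1$, and from $\mathcal L^{\perp}$ with multiplicity $\mathrm{dim}\,\mathcal C_{\pm}$, and all three must be tracked at once. The coincidences that force the case distinction --- the merge $c-1=1$ precisely at $q=0$, and the disappearance of $-1$ from $\sigma(P)$ for non-bipartite (in particular odd-unicyclic) $G$ --- are what decide, in each regime, whether the apparent extra $\{1\}$ or $\{\pm1\}$ is genuinely new or already present in the main term; getting that union right is where care is needed.
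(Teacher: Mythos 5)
Your proposal is correct and follows essentially the same route as the paper: you invoke Lemma~\ref{keylemma} with $c=\kappa\tau$, $c'=1$, identify $dSd^{*}=P$, and your substitution $\lambda=\sqrt{c-1}\,\eta$ with $\sqrt{c-1}=e^{i\mathrm{arg}(\tau)}$, $c/(2\sqrt{c-1})=\kappa/\sqrt{\kappa^{2}+q^{2}}$ is exactly the paper's rewriting $c-1=e^{2i\gamma}$, $c=2e^{i\gamma}\cos\gamma$ of the characteristic equation into the $\varphi_1$ form. Your treatment of the $\pm1$ eigenvalues via $\mathrm{dim}\,\mathcal{C}_{+}=|E|-|V|+1$, $\mathrm{dim}\,\mathcal{C}_{-}=|E|-|V|+\bs{1}_{\{G\ \mathrm{bipartite}\}}$ and the tree/unicyclic/multicyclic split is precisely the paper's appeal to the last half of the proof of Proposition~2 (its table of $M_{\pm}$), at the same level of rigor.
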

\begin{proof}
Eq.~(\ref{Eq:1}) implies 
	\begin{equation}\label{Eq:rew}
        \mathrm{det}(\lambda^2-cJ\lambda+c-1)=0, 
        \end{equation}
where $J=dSd^*$ is expressed by 
	\[ (Jf)(u)=\sum_{e:t(e)=u}\frac{1}{\sqrt{\mathrm{deg}(u)\mathrm{deg}(o(e))}}. \]
Since $P$ is reversible, $\sigma(P)=\sigma(J)$ holds. 
Taking $\gamma=\mathrm{arg}(\tau)\in [\pi/2,\pi/2]$, we have 
	\begin{equation}\label{Eq:c} c-1=e^{2i\gamma} \mathrm{\;and\;} c=2e^{i\gamma}\cos \gamma. \end{equation} 
Inserting Eq.~(\ref{Eq:c}) into Eq.~(\ref{Eq:rew}) provides
	\[ \mathrm{det}\left ( \frac{(e^{-i\gamma}\lambda)-(e^{-i\gamma}\lambda)^{-1}}{2} -\frac{\kappa}{\sqrt{\kappa^2+q^2(k)}} P \right)=0. \]
We should remark that since $1\in\sigma(P)$ with simple multiplicity, then $1\in \kappa/\sqrt{\kappa^2+q^2(k)} \sigma(P)$ if and only if $q(k)=0$. 
and also remark that since $-1\in\sigma(P)$ if and only if $G$ is bipartite and $q(k)=0$, 
then $-1\in \kappa/\sqrt{\kappa^2+q^2(k)} \sigma(P)$ if and only if $q(k)=0$ and $G$ is bipartite. 
Put $M_+=|E|-|V|+\bs{1}_{\{q(k)=0\}}$ and $M_-=|E|-|V|+\bs{1}_{\{q(k)=0\;and \;G \;is \;bipartite\}}$. 
Let $b(G)$ be the number of the essential cycles of $G$. 
So the following statement holds: 
\begin{center}
\begin{tabular}{|c|c|c|c|c|}\hline
 & $G$ is a tree & $b(G)=1$, bipartite & $b(G)=1$, non-bipartite & $b(G)\geq 2$ \\ \hline
$q=0$ & $M_\pm\leq 0$ & $M_\pm=1$ & $M_+=1$, $M_-=0$ & $M_\pm>0$ \\ \hline
$q\neq 0$ & $M_\pm\leq 0$ & $M_\pm=0$ & $M_\pm=0$ & $M_\pm>0$ \\ \hline
\end{tabular}
\end{center}
Thus we can complete the proof by applying the last half of the proof of Proposition~2. 
\end{proof}
\begin{corollary}
	Set $A=\left\{k^2\in [0,\infty): 1\in e^{i(kL+\mathrm{arg}(\tau))}\varphi_1^{-1}\left( \frac{\kappa}{\kappa+q^2(k)}\sigma(P) \right)\right\}$. 
        The spectrum of the quantum graph walk is expressed as follows:  
        \[ \begin{cases}
           A & \text{: $G$ is a tree} \\
           A\cup \left\{ \left(\frac{2n\pi}{L}\right)^2: n\in \mathbb{N} \right\} \cup \left\{ \left(\frac{(2n+1)\pi }{L}\right)^2: n\in \mathbb{N} \right\}  & \text{: $G$ has at least one cycle} \\
           \end{cases}
        \]
\end{corollary}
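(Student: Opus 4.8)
The plan is to read off the eigenvalue condition from Proposition~4, which says that a non-trivial solution of the quantum graph exists if and only if $1\in\sigma(\widetilde U)$, and then to invert the spectral description of $\sigma(\widetilde U)$ obtained in the preceding Proposition. First I would recall from that Proposition that, under the homogeneous hypotheses $L_e=L$, $\mathrm{deg}(u)=\kappa$, $\alpha_u=\alpha$,
\[
\sigma(\widetilde U)=e^{ikL}\Bigl(e^{i\mathrm{arg}(\tau)}\varphi_1^{-1}\bigl(\tfrac{\kappa}{\sqrt{\kappa^2+q^2(k)}}\sigma(P)\bigr)\ \cup\ E_{\mathrm{cyc}}(k)\Bigr),
\]
where $E_{\mathrm{cyc}}(k)$ is the cycle-part $\{\emptyset\}$, $\{1\}$ or $\{\pm1\}$ according to the table there. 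So $1\in\sigma(\widetilde U)$ happens exactly when $1$ lies in $e^{i(kL+\mathrm{arg}(\tau))}\varphi_1^{-1}(\tfrac{\kappa}{\sqrt{\kappa^2+q^2(k)}}\sigma(P))$ \emph{or} when $e^{ikL}\cdot(\pm1)=1$ for an eigenvalue $\pm1$ that is actually present in the cycle-part. The first alternative is by definition the set $A$ in the corollary (after squaring the spectral parameter to pass from $k$ to $k^2$). The job is then to show that the second alternative contributes precisely the announced discrete sets of squared wavenumbers when $G$ has at least one cycle, and contributes nothing when $G$ is a tree.

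The key steps, in order, are: (i) State that $\sigma(\widetilde U)$ is as above and that Proposition~4 reduces the problem to the membership $1\in\sigma(\widetilde U)$. (ii) Isolate the contribution of the "continuous" part: the condition $1\in e^{i(kL+\mathrm{arg}(\tau))}\varphi_1^{-1}(\tfrac{\kappa}{\sqrt{\kappa^2+q^2(k)}}\sigma(P))$ is, verbatim, $k^2\in A$. (iii) Handle the cycle part. When $G$ is a tree, $E_{\mathrm{cyc}}(k)=\emptyset$ for every $k$ (the table: $M_\pm\le 0$ in both the $q=0$ and $q\ne0$ rows), so nothing is added and the spectrum is exactly $A$. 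When $G$ has at least one cycle, $b(G)\ge1$, and here I would split on whether $q(k)=0$. Note $q(k)=\alpha/k=0$ forces $\alpha=0$, i.e. $q\equiv0$; if instead $\alpha\ne0$ then $q(k)\ne0$ for all $k>0$. In the generic case $\alpha\ne0$: then $q(k)\ne0$, the table gives $M_+=M_-=|E|-|V|\ge0$, which is $>0$ as soon as $b(G)\ge1$ (by Eq.~(3.?) relating $|E|-|V|$ to the cycle count), so $E_{\mathrm{cyc}}(k)=\{\pm1\}$; hence $e^{ikL}$ contributes $1$ to $\sigma(\widetilde U)$ iff $e^{ikL}=\pm1$, i.e. $kL\in\pi\mathbb Z$, i.e. $kL=2n\pi$ or $kL=(2n+1)\pi$, giving the two families $\{(2n\pi/L)^2\}$ and $\{((2n+1)\pi/L)^2\}$. (iv) Dispose of the degenerate case $\alpha=0$ (so $q\equiv0$): then $-1$ is present in the cycle part iff $G$ is bipartite, and in the non-bipartite one-cycle case only $\{1\}$ appears; but one must still check that $q(k)=0$ combined with the continuous-part condition covers the "missing" eigenvalue, or simply absorb $k=0$ into $A$ and note that for $k>0$ the same $kL\in\pi\mathbb Z$ analysis applies — either way the union with the two arithmetic-progression families is unchanged, because the odd family is already forced by $\{1\}$ and the even family by $e^{ikL}=1$. (v) Conclude by assembling the cases into the displayed two-line formula, squaring throughout.

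I would expect the main obstacle to be step (iv): the careful bookkeeping of the $q(k)=0$ versus $q(k)\ne0$ dichotomy together with bipartiteness and the one-cycle/odd-cycle subtleties, making sure that in every sub-case the union of $A$ with the cycle-induced discrete set collapses to exactly $\{(2n\pi/L)^2\}\cup\{((2n+1)\pi/L)^2\}$ and that no spurious points are introduced (for instance, when $G$ is an odd cycle and $\alpha=0$, one must verify $-1$ genuinely does not appear, so the even family is not doubled, yet the odd family still comes from the surviving $\{1\}$ after multiplication by $e^{ikL}=-1$). A secondary, purely notational nuisance is the passage between the wavenumber $k$ used in Proposition~5 and the energy $k^2$ used in the corollary, and the fact that $q(k)=\alpha/k$ is $k$-dependent, so $A$ is genuinely a subset of energies defined by a transcendental condition rather than a finite set; I would make explicit that no claim of discreteness of $A$ itself is being made, only that the extra spectrum beyond $A$ is the stated arithmetic progression of energies. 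Modulo this case analysis, the proof is a direct translation of Proposition~4 through the spectral map of Proposition~5.
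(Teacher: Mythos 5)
Your overall route is the intended one: the paper gives no separate argument for this corollary, and it is meant to be read off by combining Proposition~4 (non-trivial solutions exist iff $1\in\sigma(\widetilde{U})$) with the spectral formula of Proposition~5, so your steps (i)--(ii) are fine. The problems are in your treatment of the cycle part. In step (iii) you assert that for $\alpha\neq 0$ the table gives $M_\pm=|E|-|V|>0$ as soon as $b(G)\geq 1$; but for a unicyclic graph $|E|-|V|=0$, so $M_\pm=0$ there, and Proposition~5 explicitly assigns the empty cycle contribution when ``$q(k)\neq 0$ and $G$ has at most one cycle''. Hence for $b(G)=1$ and $\alpha\neq 0$ your argument produces no extra eigenvalues at all, and the two arithmetic families cannot be obtained in the way you describe.

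Step (iv) also contains a concrete error: if the cycle part is $\{1\}$ (the case $q(k)=0$, one odd cycle), the corresponding eigenvalue of $\widetilde{U}$ is $e^{ikL}\cdot 1$, which equals $1$ only when $kL\in 2\pi\mathbb{Z}$; choosing $e^{ikL}=-1$ yields the eigenvalue $-1$, which is irrelevant to the membership $1\in\sigma(\widetilde{U})$, so the odd family does \emph{not} ``come from the surviving $\{1\}$ after multiplication by $e^{ikL}=-1$''. Nor is it supplied by $A$: at $kL=(2n+1)\pi$ with $q=0$ the continuous-part condition reduces to $-1\in\sigma(P)$, i.e.\ bipartiteness, which fails for an odd cycle (for the triangle with $\alpha=0$ the quantum graph is a circle of circumference $3L$, whose spectrum omits $(\pi/L)^2$). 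So the unicyclic sub-cases are exactly where the claimed dichotomy is delicate, and your proposal does not close them: the argument goes through cleanly only for $b(G)\geq 2$, where the cycle part is always $\{\pm 1\}$ and $e^{ikL}=\pm 1$ gives both families; for $b(G)=1$ you must either supply a separate argument or note that the statement needs restriction/refinement there.
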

\subsection{Example 3: Positive support of the Grover walk}
Recently there are some trials to apply QW to graph isomorphism problems. 
For a matrix $M$, the positive support of $M$, $M^+$, is denoted by 
	\[ (M^+)_{i,j}= \begin{cases} 1 & \text{: $(M)_{i,j}\neq 0$} \\ 0 & \text{: otherwise} \end{cases} \]
It is suggested that the spectra of $(U^3)^+$, which is the positive support of the cube of the Grover matrix $U^3$, 
outperforms distinguishing strongly regular graphs in \cite{Emms}. 
It seems to be that not only the Grover matrix $U$ itself but the positive support $(U^n)^+$ of its $n$-th power is an important operator of a graph.
For a first step, in this section, we consider $\sigma(U^3)^+$ applying Lemma~\ref{keylemma}. 
To do so we prepare the following fact: 
	\begin{proposition}\cite{HKSS:JMI} \label{Pro:HKSS_JMI}
        Let $G$ be a connected and $\kappa$-regular graph with $g(G)\geq 5$. Then we have 
        \begin{equation} (U^3)^+=(U^+)^3+ {}^TU^+. \end{equation}
	\end{proposition}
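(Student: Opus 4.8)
\medskip
\noindent\emph{Proof proposal.}
The plan is to reduce everything to three $0$-$1$ structure matrices of $G$. Write $S$ for the flip $(S\psi)(e)=\psi(\bar e)$ and $B$ for the arc-adjacency (``follows'') matrix, $B_{e,f}=1\iff o(e)=t(f)$; for a $\kappa$-regular graph the Grover operator is $U=\tfrac{2}{\kappa}B-S$, and since the reverse-arc entries of $U$ equal $\tfrac{2}{\kappa}-1<0$ (for $\kappa\ge3$) while every other nonzero entry equals $\tfrac{2}{\kappa}$, we have $U^{+}=B-S$ and ${}^{T}U^{+}=B^{\mathsf T}-S$; also $S^{2}=1_{A}$. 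First I would expand, using only $S^{2}=1_{A}$,
\[
U^{3}=\tfrac{8}{\kappa^{3}}\,B^{3}-\tfrac{4}{\kappa^{2}}\bigl(B^{2}S+BSB+SB^{2}\bigr)+\tfrac{4}{\kappa}\,B+\tfrac{2}{\kappa}\,SBS-S ,
\]
so the task becomes to read the sign of $(U^{3})_{e,f}$ off the seven monomials $B^{3},B^{2}S,BSB,SB^{2},B,SBS,S$ and to match the resulting positive support with $\operatorname{supp}((U^{+})^{3})\cup\operatorname{supp}({}^{T}U^{+})$.

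Second, I would evaluate each monomial using $g(G)\ge5$, i.e.\ that balls of radius $2$ in $G$ are trees, so that no triangle or $4$-cycle occurs. The only feature of a pair $(e,f)$ that matters is the ``local type'' of the quadruple $(o(e),t(e),o(f),t(f))$: which of these four vertices coincide, which are adjacent, and whether $o(e)$ and $t(f)$ are at distance $2$; this gives finitely many types. On each type the monomials are elementary: $(B^{3})_{e,f}$ equals $\kappa$ if $o(e)=t(f)$, equals $1$ if $d(o(e),t(f))=2$, and equals $0$ otherwise; $(BSB)_{e,f}=\kappa\,[\,o(e)=t(f)\,]$; $(B^{2}S)_{e,f}=[\,o(f)\sim o(e)\,]$ and $(SB^{2})_{e,f}=[\,t(e)\sim t(f)\,]$; and $B$, $SBS$, $S$ are the indicators of $o(e)=t(f)$, $o(f)=t(e)$, $f=\bar e$. (Absence of $4$-cycles is exactly what makes the distance-$2$ count equal to $1$ and not merely $\ge1$.)

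Third, I would assemble. On $\{\,o(e)=t(f),\ f\ne\bar e\,\}$ the four ``bulk'' contributions cancel identically,
\[
\tfrac{8}{\kappa^{3}}\cdot\kappa-\tfrac{4}{\kappa^{2}}(1+\kappa+1)+\tfrac{4}{\kappa}\cdot1=0 ,
\]
and on $\{\,f=\bar e\,\}$ the surviving terms sum to $\tfrac{2}{\kappa}-1<0$; hence $(U^{3})_{e,f}\le0$ for every pair with $o(e)=t(f)$, which removes precisely the spurious ``one-step'' part of $(U^{+})^{3}$. For the remaining types one checks, using ``no triangle'' to force $d(o(e),t(f))=2$ in the relevant cases, that $(U^{3})_{e,f}>0$ in exactly two situations: (a) $o(f)=t(e)$ with $o(e)\ne t(f)$, where $(U^{3})_{e,f}=\tfrac{2}{\kappa}\bigl(\tfrac{2}{\kappa}-1\bigr)^{2}$; and (b) $o(f)\ne t(e)$, $o(e)\ne t(f)$, $o(f)\not\sim o(e)$, $t(e)\not\sim t(f)$ and $d(o(e),t(f))=2$, where $(U^{3})_{e,f}=(2/\kappa)^{3}$; in all other cases $(U^{3})_{e,f}\le0$. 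Finally I would verify that set (a) is exactly $\operatorname{supp}({}^{T}U^{+})$, that set (b) is exactly $\operatorname{supp}((U^{+})^{3})$ with $(U^{+})^{3}$ itself $0$-$1$ on it (the connecting vertex being unique, $g\ge5$), and that (a) and (b) are disjoint; this yields $(U^{3})^{+}=(U^{+})^{3}+{}^{T}U^{+}$.

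The hard part will be the bookkeeping in steps two and three: enumerating the girth-$5$-admissible local types of $(e,f)$ with no omission and summing the seven signed monomial entries correctly on each. Within that, the two genuinely load-bearing points are the exact cancellation of the three degree-$\kappa$ contributions on $\{o(e)=t(f)\}$ --- this is what is special about the factor $2/\kappa$ in the Grover coin and what deletes the extra part of $(U^{+})^{3}$ --- and the use of ``no $4$-cycle'' to guarantee that a distance-$2$ pair of vertices has a single connecting vertex, so that both $(U^{+})^{3}$ and the positive part of $U^{3}$ are honest $0$-$1$ matrices; the rest is routine.
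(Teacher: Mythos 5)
Your argument is essentially sound, but note first that the paper does not prove this statement at all: it is imported from the reference \cite{HKSS:JMI} and used as a black box (in the sequel the paper only combines it with ${}^TU^+=SU^+S$ and the transfer relation \ref{Eq:close} to get the spectrum of $(U^3)^+$). So there is no in-paper proof to compare against; judged on its own, your direct entrywise counting is a legitimate, self-contained route, in the spirit of the cited note. The key computations check out: $U=\tfrac{2}{\kappa}B-S$ and $U^+=B-S$, ${}^TU^+=B^{\mathsf T}-S$; the monomial values under $g(G)\geq 5$ are as you state; on $\{o(e)=t(f),\,f\neq\bar e\}$ the entry of $U^3$ is $\tfrac{8}{\kappa^{2}}-\tfrac{4}{\kappa^{2}}(\kappa+2)+\tfrac{4}{\kappa}=0$ and at $f=\bar e$ it is $\tfrac{2}{\kappa}-1$; on your set (a) it is $\tfrac{2}{\kappa}\bigl(\tfrac{2}{\kappa}-1\bigr)^{2}$ and on (b) it is $(2/\kappa)^{3}$, with all remaining types nonpositive. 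You also correctly identify the genuinely load-bearing final step: since the claim is a matrix identity and not a statement about supports, you must show $(U^+)^3$ is exactly the $0$--$1$ indicator of (b) (here girth $\geq 5$ does double duty: absence of triangles forces $d(o(e),t(f))=2$ whenever $o(e)\sim o(f)$ or $t(e)\sim t(f)$, so no entry of $(U^+)^3$ can be negative, and absence of $4$-cycles makes the connecting vertex unique and forbids both adjacencies at once), that ${}^TU^+$ is the indicator of (a), and that (a) and (b) are disjoint. Two caveats. First, the identity and your positivity claim on (a) genuinely need $\kappa\geq 3$: for the cycle $C_n$, $n\geq 5$, one has $U=B-S$, so $(U^3)^+=(U^+)^3\neq (U^+)^3+{}^TU^+$; thus your parenthetical ``$\kappa\geq 3$'' is not cosmetic, and the hypothesis as quoted in the paper tacitly assumes it (as does \cite{HKSS:JMI}). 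Second, what remains is exactly the finite enumeration of girth-admissible types you describe, including the degenerate coincidences $e=f$, $o(e)=o(f)$, $t(e)=t(f)$; this is routine, but the proof is not complete until it is written out without omission.
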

We can rewrite $U^+$ as follows: 
	\begin{lemma}
	When we take $m_A(e)=1$, $m_V(u)=1$ and $w(e)=1$, then we have 
	\[U^+=S(d^*d-1_A)\]
        \end{lemma}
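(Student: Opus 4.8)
The plan is to prove the identity by comparing the two operators entrywise in the arc basis $\{e\}_{e\in A(G)}$, since the positive support depends only on which matrix entries vanish. So the whole proof is a matter of unwinding the definitions of $d$, $d^*$ and $S$ from Section~2 twice and reading off supports.

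First I would make the Grover matrix $U$ explicit. On a $\kappa$-regular graph the Grover walk is the instance $U_1$ of the Szegedy walk obtained by taking $w_1(e)\equiv 1/\sqrt{\kappa}$ --- which satisfies Setting~1\,(a) because $\mathrm{deg}(u)=\kappa$ for every $u$ --- together with $m_A\equiv m_V\equiv 1$. Then $w_1(e)\overline{w_1(\bar f)}=1/\kappa$, so Eq.~(\ref{Eq:U1}) becomes $(U\psi)(e)=\sum_{f:\,o(e)=t(f)}\bigl(\tfrac{2}{\kappa}-\delta_{e,\bar f}\bigr)\psi(f)$. Reading off the coefficient of $\psi(f)$, the $(e,f)$-entry of $U$ equals $\tfrac{2}{\kappa}$ when $t(f)=o(e)$ and $f\neq\bar e$, it equals $\tfrac{2}{\kappa}-1$ when $f=\bar e$, and it is $0$ otherwise. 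In particular the support of $U$ is contained in the set of pairs with $t(f)=o(e)$, and within that set the only entry that must be discussed separately is the backtracking one $f=\bar e$; using $g(G)\geq 5$ so that $G$ is simple, the arcs $f$ with $t(f)=o(e)$ form a $\kappa$-element set of distinct arcs containing $\bar e$ exactly once, and one reads off that $(U^{+})_{e,f}=\bs{1}_{\{t(f)=o(e)\}}-\delta_{f,\bar e}$.

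Next I would compute the right-hand side. With $w\equiv 1$ and $m_A\equiv m_V\equiv 1$, Eqs.~(\ref{Eq:d}) and (\ref{Eq:d*}) collapse to $(d\psi)(u)=\sum_{e:\,t(e)=u}\psi(e)$ and $(d^{*}f)(e)=f(t(e))$, hence $(d^{*}d\psi)(e)=\sum_{f:\,t(f)=t(e)}\psi(f)$, i.e.\ the $(e,f)$-entry of $d^{*}d$ is $\bs{1}_{\{t(e)=t(f)\}}$. Since the $(e,f)$-entry of $SM$ is the $(\bar e,f)$-entry of $M$ for any operator $M$ (immediate from $(S\psi)(e)=\psi(\bar e)$), and $t(\bar e)=o(e)$, the $(e,f)$-entry of $S(d^{*}d-1_A)$ equals $\bs{1}_{\{o(e)=t(f)\}}-\delta_{\bar e,f}$. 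Comparing with the expression for $(U^{+})_{e,f}$ obtained above, the two matrices agree entrywise, which is the claim; this also puts $U^{+}$ in the form $W=S(c\,d^{*}d-1_A)$ with $c=1$, so Lemma~\ref{keylemma} becomes applicable.

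The step I expect to be the main obstacle is the determination of the support of $U$: one has to make sure that the $-\delta_{e,\bar f}$ produced by the shift in Eq.~(\ref{Eq:U1}) lines up with the $-\delta_{\bar e,f}$ in $S(d^{*}d-1_A)$ against the correct geometric term, and the hypotheses ($\kappa$-regularity together with $g(G)\geq 5$, hence no loops and no multiple edges) enter precisely here to guarantee that the local picture at every vertex is a clean $\kappa$-star of distinct arcs, so that there is no hidden coincidence among the arcs $f$ with $t(f)=o(e)$. Everything else is routine bookkeeping.
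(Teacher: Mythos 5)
Your entrywise strategy is the right one, and your computation of the right-hand side is fine: with $w\equiv 1$, $m_A\equiv m_V\equiv 1$ one indeed gets $(d^*d)_{e,f}=\bs{1}_{\{t(e)=t(f)\}}$ and hence $\bigl(S(d^*d-1_A)\bigr)_{e,f}=\bs{1}_{\{t(f)=o(e)\}}-\delta_{f,\bar e}$, the non-backtracking (Hashimoto) matrix. The gap is exactly at the point you flag as the crux and then do not actually settle: the backtracking entry of $U^+$. You have $U_{e,\bar e}=\tfrac{2}{\kappa}-1$, which is \emph{nonzero} whenever $\kappa\neq 2$; so under the criterion you adopt (``the positive support depends only on which entries vanish'', i.e.\ the paper's displayed definition $(M^+)_{ij}=\bs{1}_{\{M_{ij}\neq 0\}}$), you would be forced to conclude $(U^+)_{e,\bar e}=1$, i.e.\ $U^+=Sd^*d$, and the stated identity would be false for every $\kappa\geq 3$. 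Your ``one reads off'' step silently discards this entry without justification, and neither simplicity nor $g(G)\geq 5$ helps: the girth hypothesis only guarantees distinctness of the arcs in the $\kappa$-star, which was never the issue.

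What actually makes the lemma true is a sign argument that is missing from your proof. In this line of work (Emms et al., and the HKSS papers the section relies on) the positive support is $\bs{1}_{\{M_{ij}>0\}}$, not $\bs{1}_{\{M_{ij}\neq 0\}}$; with that reading the off-diagonal coin entries $2/\kappa>0$ survive while the diagonal entry $2/\kappa-1\leq 0$ is killed for every $\kappa\geq 2$, which is precisely the claimed identity (and is the only reading consistent with the spectra $s^{(1)}_\pm$ quoted afterwards, which are those of the non-backtracking matrix). So the fix is to replace the appeal to girth by the observation $2/\kappa-1\leq 0$ for $\kappa\geq 2$, and to note explicitly that the relevant hypothesis here is minimum degree at least $2$ (at a degree-one vertex the coin entry $2/1-1=1>0$ survives and the identity fails), the regularity with $\kappa\geq 2$ of the surrounding application being what supplies it. The paper states this lemma without proof, so there is nothing to compare against on its side; as written, though, your argument proves the statement only in the degenerate case $\kappa=2$ and contradicts itself for $\kappa\geq 3$.
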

The results on $(U)^+$ and $(U^2)^+$ have already appeared in \cite{HKSS:JMI,HKSS:PJMI}. In this paper, we newly obtain $(U^3)^+$ case as follows. 
\begin{proposition}
Assume that $G$ is a connected and $\kappa$-regular graph with $g(G)\geq 5$. 
Let $M$ be the adjacency matrix of $G$. 
We set 
        \begin{align}
		s_{\pm}^{(1)}(x) &= \frac{x}{2} \pm \frac{\sqrt{x^2-4\kappa+4}}{2}, \\
                s_{\pm}^{(2)}(x) &= \frac{x^2-2 \kappa+4}{2}\pm \frac{x\sqrt{x^2-4\kappa+4}}{2}
	\end{align}
	and 
       \begin{multline} 
        	s_{\pm}^{(3)}(x)=\frac{x (x^2+4-3\kappa)}{2}  \\ \pm \frac{1}{2}\sqrt{x^6+2 (2-3\kappa) x^4+\left(13\kappa^2-24\kappa+16\right) x^2-8 (\kappa-1)(\kappa^2-2\kappa+2)}. 
        \end{multline}
        Then we have 
	\begin{align}
        \sigma\left((U)^+\right) &= \{s_\pm^{(1)} (\lambda):\lambda\in \sigma(M)\} \cup \{1\}^{|E|-|V|} \cup \{-1\}^{|E|-|V|}\\
        \sigma\left((U^2)^+\right) &= \{s_\pm^{(2)} (\lambda):\lambda\in \sigma(M)\} \cup \{2\}^{2(|E|-|V|)}\\
        \sigma\left((U^3)^+\right) &=\{s_\pm^{(3)} (\lambda):\lambda\in \sigma(M)\} \cup \{2\}^{|E|-|V|} \cup \{-2\}^{|E|-|V|}
        \end{align}
\end{proposition}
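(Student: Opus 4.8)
The plan is to feed Proposition~\ref{Pro:HKSS_JMI} into the spectral map of Lemma~\ref{keylemma}. By Proposition~\ref{Pro:HKSS_JMI}, $(U^3)^+=(U^+)^3+{}^TU^+$, and by the preceding Lemma $U^+=S(d^*d-1_A)$; this is exactly the operator $W=S(cd^*d-1_A)$ of Section~3.1 in the case $m_A\equiv m_V\equiv w\equiv 1$, $c=1$. Here a short computation gives $dd^*=\kappa\,1_V$ (so $c'=\kappa$) and $dSd^*=M$, the adjacency matrix of $G$; since $S$ and $d^*d$ are symmetric, ${}^TU^+=(d^*d-1_A)S=d^*dS-S$. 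The key structural point is that $U^+$, ${}^TU^+$, and hence $(U^3)^+$, all leave invariant both $\mathcal{L}=d^*(C^1(G))+Sd^*(C^1(G))$ and $\mathcal{L}^\bot=\mathrm{ker}(d)\cap\mathrm{ker}(dS)$: invariance of $\mathcal{L}$ under $U^+$ is part of Lemma~\ref{keylemma}, $d^*dS(\mathcal{L})\subseteq d^*(C^1(G))\subseteq\mathcal{L}$ and $S\mathcal{L}=\mathcal{L}$; while for $\psi\in\mathcal{L}^\bot$ one has $d\psi=0$, so $U^+\psi=-S\psi$ and ${}^TU^+\psi=-S\psi$, with $S$ preserving $\mathcal{L}^\bot$. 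Hence $\sigma((U^3)^+)$ may be computed separately on $\mathcal{L}$ and on $\mathcal{L}^\bot$.

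On $\mathcal{L}^\bot$ the above gives $(U^3)^+|_{\mathcal{L}^\bot}=(-S)^3+(-S)=-2S$. Since $\mathcal{L}^\bot$ is $S$-invariant, $\mathcal{L}^\bot=(\mathcal{L}^\bot\cap\mathcal{H}_+)\oplus(\mathcal{L}^\bot\cap\mathcal{H}_-)$, on which $-2S$ acts as $-2$ and $+2$ respectively, so only the two dimensions remain to be found. For $\psi\in\mathcal{H}_-$ we have $dS\psi=-d\psi$, whence $\mathcal{L}^\bot\cap\mathcal{H}_-=\mathrm{ker}(d)\cap\mathcal{H}_-$ is the cycle space of $G$, of dimension $|E|-|V|+1$; and $\mathcal{L}^\bot\cap\mathcal{H}_+=\mathrm{ker}(d)\cap\mathcal{H}_+$ is the kernel of the unsigned incidence matrix of $G$, of dimension $|E|-|V|$ if $G$ is non-bipartite and $|E|-|V|+1$ if $G$ is bipartite. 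Hence $\mathcal{L}^\bot$ contributes the eigenvalue $2$ with multiplicity $|E|-|V|+1$ and the eigenvalue $-2$ with multiplicity $|E|-|V|$ (one more in the bipartite case).

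On $\mathcal{L}$ I decompose along $\sigma(M)$. Eq.~(\ref{Eq:close}) for $W=U^+$ reads $U^+d^*=(\kappa-1)Sd^*$ and $U^+Sd^*=-d^*+Sd^*M$, so for an eigenvector $f$ of $M$ with $Mf=\nu f$ the subspace $V_\nu=\spann\{d^*f,Sd^*f\}$ is $U^+$-invariant; it is also ${}^TU^+$-invariant, since $d^*dS\,d^*f=\nu d^*f$ and $d^*dS\,Sd^*f=\kappa d^*f$. For $\nu\neq\pm\kappa$ the vectors $d^*f$ and $Sd^*f$ are independent (equality with either sign would force $f$ constant or bipartite-alternating, i.e. $\nu=\pm\kappa$), and in this basis
\[
U^+|_{V_\nu}=\begin{bmatrix}0&-1\\ \kappa-1&\nu\end{bmatrix},\qquad
{}^TU^+|_{V_\nu}=\begin{bmatrix}\nu&\kappa-1\\ -1&0\end{bmatrix}.
\]
I then form $(U^3)^+|_{V_\nu}=(U^+|_{V_\nu})^3+{}^TU^+|_{V_\nu}$ and verify directly that its trace is $\nu(\nu^2+4-3\kappa)=s^{(3)}_+(\nu)+s^{(3)}_-(\nu)$ and its determinant is $s^{(3)}_+(\nu)s^{(3)}_-(\nu)$; hence its eigenvalues are $s^{(3)}_\pm(\nu)$, accounting for $\{s^{(3)}_\pm(\lambda):\lambda\in\sigma(M)\setminus\{\pm\kappa\}\}$ (these $V_\nu$ being independent and spanning the relevant part of $\mathcal{L}$ by the same invertibility argument that underlies Lemma~\ref{keylemma}). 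The degenerate directions are handled separately: the Perron eigenvalue $\nu=\kappa$ always occurs and is simple; there $f$ is constant, $d^*f=Sd^*f$, $V_\kappa$ is one-dimensional, and $U^+$ and ${}^TU^+$ both act on it as $\kappa-1$, so $(U^3)^+$ acts as $(\kappa-1)^3+(\kappa-1)=(\kappa-1)(\kappa^2-2\kappa+2)$, which is exactly one of the two numbers $s^{(3)}_\pm(\kappa)$, the other being $2$. If $G$ is bipartite then $\nu=-\kappa$ likewise occurs (simple), $V_{-\kappa}$ is one-dimensional, and $(U^3)^+$ acts on it as $-(\kappa-1)(\kappa^2-2\kappa+2)$, one of $s^{(3)}_\pm(-\kappa)$, the other being $-2$. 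Assembling the $\mathcal{L}$- and $\mathcal{L}^\bot$-contributions and matching multiplicities --- of the $|E|-|V|+1$ copies of $2$ from $\mathcal{L}^\bot$, one realizes the value $2$ that appears as $s^{(3)}_{-}(\kappa)$ in $\{s^{(3)}_\pm(\lambda):\lambda\in\sigma(M)\}$ while the remaining $|E|-|V|$ form the extra $\{2\}^{|E|-|V|}$, and symmetrically for $-2$ according as $G$ is bipartite or not --- gives the claimed formula for $\sigma((U^3)^+)$. The formulas for $\sigma((U)^+)$ and $\sigma((U^2)^+)$, already in \cite{HKSS:JMI,HKSS:PJMI}, come out the same way with the $2\times2$ block above and its square in place of its cube.

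The step I expect to be the main obstacle is the determinant identity in the $\mathcal{L}$-part, namely checking that $\det\!\bigl((U^+|_{V_\nu})^3+{}^TU^+|_{V_\nu}\bigr)$ equals $s^{(3)}_+(\nu)s^{(3)}_-(\nu)$ --- a degree-six polynomial identity in $\nu$ --- together with the multiplicity bookkeeping at the degenerate eigenvalues $\pm\kappa$ of $M$, where one must see exactly how the $\pm1$ shifts in $\dim(\mathcal{L}^\bot\cap\mathcal{H}_\pm)$ (cycle space versus unsigned-incidence kernel, plus the bipartite dichotomy) offset the ``missing partners'' $\pm2$ of $s^{(3)}_\pm(\pm\kappa)$ to yield precisely $\{2\}^{|E|-|V|}\cup\{-2\}^{|E|-|V|}$.
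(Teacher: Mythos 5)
Your proposal is correct and follows essentially the same route as the paper: it combines $(U^3)^+=(U^+)^3+{}^TU^+$ with the transfer-matrix relations $U^+d^*=(\kappa-1)Sd^*$, $U^+Sd^*=-d^*+Sd^*M$ (the paper packages these as $\widetilde{T}^3+\widetilde{T}'=\sum_\lambda\Lambda(\lambda)\otimes\Pi_\lambda$ rather than your per-eigenvector $2\times2$ blocks, but this is the same computation, and your trace/determinant values agree with $\sigma(\Lambda(\lambda))=\{s^{(3)}_\pm(\lambda)\}$), and then handles $\mathcal{L}^\bot=\ker(d)\cap\ker(dS)$ separately, where $(U^3)^+$ acts as $-2S$. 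Your extra care at the degenerate eigenvalues $\lambda=\pm\kappa$, where $d^*f$ and $Sd^*f$ coincide up to sign, and in the exact dimensions of $\mathcal{L}^\bot\cap\mathcal{H}_\pm$ (cycle space versus unsigned-incidence kernel, with the bipartite dichotomy) is a genuine refinement of the paper's bookkeeping, which simply assigns $2|V|$ eigenvalues to $\mathcal{L}$ and multiplicity $|E|-|V|$ to each of $\pm2$, but it leads to the same multiset.
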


\begin{proof}
The proofs of cases of $(U)^+$ and $(U^2)^+$ can be seen in \cite{HKSS:JMI,HKSS:PJMI} 
So we provide only the proof for the $(U^3)^+$ case. 
Since ${}^TU^+=SU^+S$ holds, by Eq.~(\ref{Eq:close}), we have 
 \begin{align} 
 	\begin{bmatrix} {}^TU^+d^* & {}^TU^+ Sd^* \end{bmatrix}
        	&= S \begin{bmatrix} U^+Sd^* & U^+ d^* \end{bmatrix} \notag \\
                &= S \begin{bmatrix} Sd^* & d^* \end{bmatrix} \widetilde{T}' \notag\\
                &= \begin{bmatrix} d^* & Sd^* \end{bmatrix} \widetilde{T}', \label{Eq:trans}
 \end{align}
where 
	\begin{align*}
        \widetilde{T} = \begin{bmatrix} 0 & -1_V \\ (k-1)1_V & M \end{bmatrix} \mathrm{\;\;and\;\;}
        \widetilde{T}'=\begin{bmatrix} 0 & 1_V \\ 1_V & 0 \end{bmatrix} \widetilde{T}\begin{bmatrix} 0 & 1_V \\ 1_V & 0 \end{bmatrix}. 
        \end{align*}
Using Eq.~(\ref{Eq:close}) again,  
	\begin{equation}\label{Eq:cube} \begin{bmatrix} (U^+)^3d^* & (U^+)^3 Sd^* \end{bmatrix}= \begin{bmatrix} d^* & Sd^* \end{bmatrix} \widetilde{T}^3 \end{equation}
Inserting Eqs.~(\ref{Eq:trans}), (\ref{Eq:cube}) into Proposition~(\ref{Pro:HKSS_JMI}) provides 
	\begin{equation}\label{Eq:cubeposi}
        	\begin{bmatrix} (U^3)^+ d^* & (U^3)^+ Sd^* \end{bmatrix}
                	= \begin{bmatrix} d^* & Sd^* \end{bmatrix} (\widetilde{T}^3+\widetilde{T}')
        \end{equation}
Now we consider the spectra of $\widetilde{T}^3+\widetilde{T}'$. 
Since $M=M^*$, $M$ is decomposed by $M=\sum_{\lambda\in \sigma(M)} \lambda \Pi_\lambda$, where $\Pi_\lambda$ is the orthogonal projection onto the eigenspace of eigenvalue $\lambda$. 
By Eq.~(\ref{Eq:cubeposi}), 
	\begin{align}
        \widetilde{T}^3+\widetilde{T}'=\sum_{\lambda\in \sigma(M)}\Lambda(\lambda) \otimes \Pi_\lambda. 
        \end{align}
Here 
	\[ \Lambda(\lambda)=\begin{bmatrix} -\lambda (\kappa-2) & -\lambda^2+2\kappa-2 \\ (\kappa-1)(\lambda^2-\kappa+1)-1 & \lambda(\lambda^2-2\kappa+2)\end{bmatrix}\]
A simple computation gives  
	\[ \sigma(\Lambda(\lambda))=\{s^{(3)}_\pm (\lambda)\}. \]
From the above, we obtain $2|V|$ eigenvalues of $(U^3)^+$. 
All the $2|V|$ eigenvectors in $\mathcal{L}\equiv d^* (C^0(G))+Sd^*(C^0(G))$ are 
	\[ \left\{(\lambda^2-2\kappa+2)d^*f_\lambda-\left(\lambda(\kappa-2)-s_{\pm}^{(3)}(\lambda)\right) Sd^*f_\lambda: \lambda\in \sigma(M) \right\}.  \]
Here $f_\lambda\in \mathrm{ker}(\lambda-M)$. 
Note that since $(U^3)^+$ is no longer ensured its regularity, there may exist $\lambda\in \sigma(M)$ such that $s_+(\lambda)=s_-(\lambda)\equiv s(\lambda)$. 
In such a case, the geometric multiplicity of $s(\lambda)$ is $\mathrm{dim}\mathrm{ker}(T-\lambda)$ 
while the algebraic multiplicity is $2\mathrm{dim}\mathrm{ker}(T-\lambda)$. 

Next we consider the other invariant subspace $\mathcal{L}^\bot$. 
It holds that $\mathcal{L}^\bot=\mathrm{ker}(d)\cap \mathrm{ker}(dS)=(\mathrm{ker}(d) \cap \mathcal{H}_+) \oplus (\mathrm{ker}(dS) \cap \mathcal{H}_+)$. 
For $\psi\in \mathrm{ker}(d)\cap \mathcal{H}_\pm$, we can check that $(U^+)^3 \psi = \mp 2\psi$. 
Therefore the algebraic multiplicities of eigenvalues $\pm 2$ are $|E|-|V|$. 
\end{proof}

Let $Z_G^{(j)}(u)=\prod_{\lambda\in\sigma((U^j)^+)} (1-u\lambda)^{-1}$, ($j\in\{1,2,3\}$). 
The following figure depict the poles of $Z_G^{(j)}(u)$ for the Petersen graph. 
\begin{figure}[htbp]
 \begin{minipage}{0.31\hsize}
  \begin{center}
   \includegraphics[width=40mm]{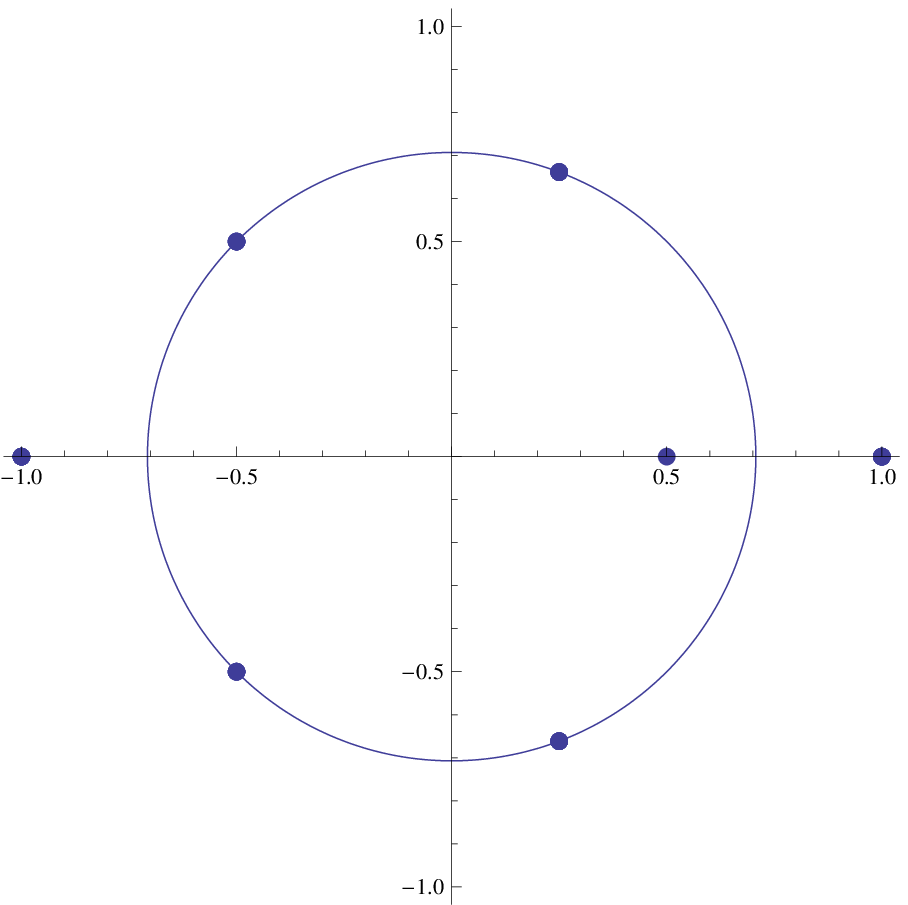}
  \end{center}
  \caption{Poles of $U^+$}
  \label{fig:one}
 \end{minipage}
 \begin{minipage}{0.31\hsize}
 \begin{center}
  \includegraphics[width=30mm]{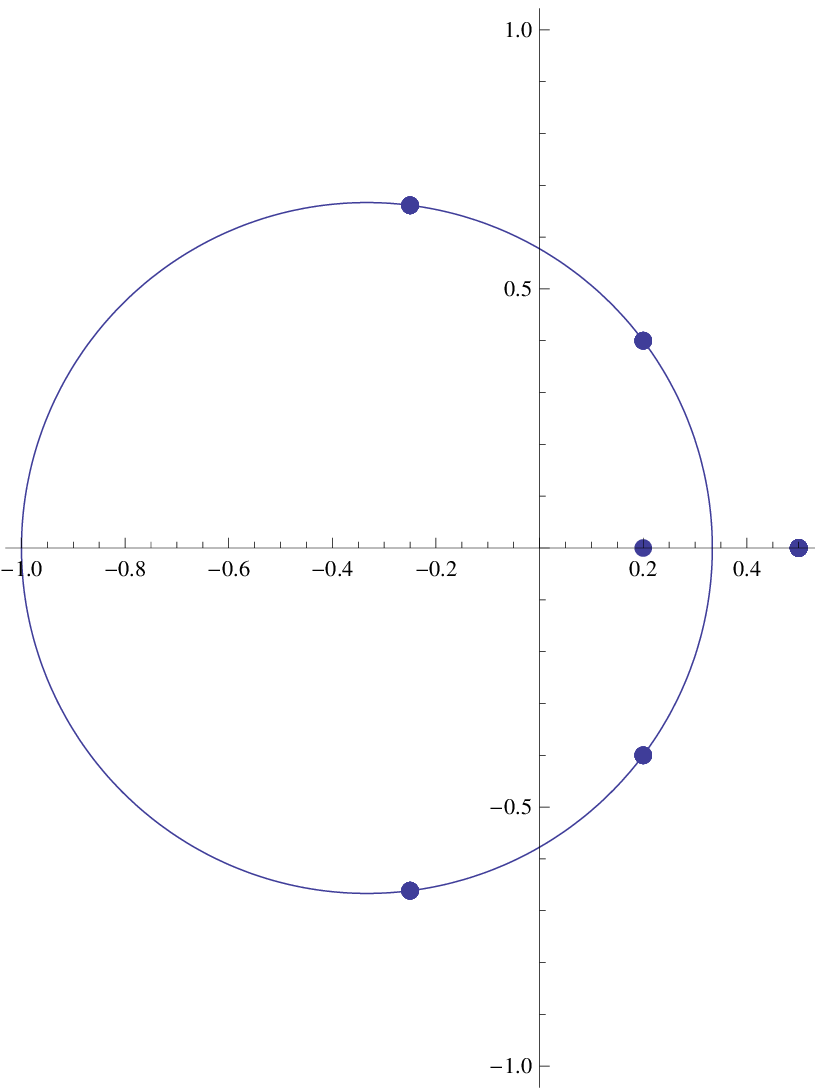}
 \end{center}
  \caption{Poles of $(U^2)^+$}
  \label{fig:two}
 \end{minipage}
 \begin{minipage}{0.31\hsize}
 \begin{center}
  \includegraphics[width=50mm]{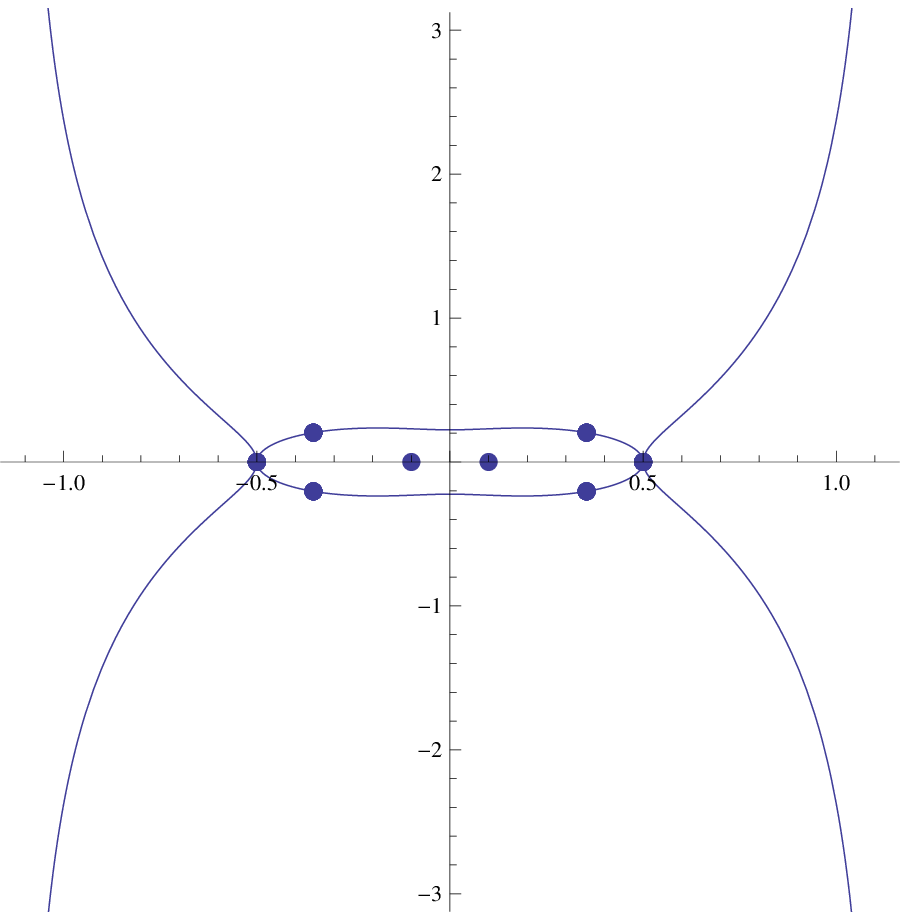}
 \end{center}
  \caption{Poles of $(U^3)^+$}
  \label{fig:three}
 \end{minipage}
\end{figure}

\noindent \\

\noindent{\bf Acknowledgment} 
ES thanks to the financial supports of the Grant-in-Aid for Young Scientists (B) and of Scientific Research (B) Japan Society for the
Promotion of Science (Grants No.25800088, No.23340027).

%


\begin{small}
\bibliographystyle{jplain}

\begin{thebibliography}{99}

\bibitem{Am}
A. Ambainis, 
Quantum walks and their algorithmic applications, 
Int. J. Quantum Inf. {\bf 1} (2003), pp.507-518.


\bibitem{Sze}
M. Szegedy, 
Quantum speed-up of Markov chain based algorithms, 
Proc. 45th IEEE Symposium on Foundations of Computer Science (2004), pp.32--41. 

\bibitem{Emms}
D. Emms, E. R. Hancock, S. Severini, R. C. Wilson, 
A matrix representation of graphs and its spectrum as a graph invariant, 
Electr. J. Combin. {\bf 13}, R34 (2006).

\bibitem{Exner}
P. Exner, P. Seba, 
Free quantum motion on a branching graph, 
Rep. Math. Phys. {\bf 28} (1989), pp.7-26.

\bibitem{HKSS:JFA}
Yu. Higuchi, N. Konno, I. Sato and E. Segawa, 
Spectral and asymptotic properties of Grover walks on crystal lattices, 
Journal of Functional Analysis {\bf 267} (2014), pp.4197-4235. 

\bibitem{HKSS:JMI}
Yu. Higuchi, N. Konno, I. Sato, E. Segawa, 
A note on the discrete-time evolutions of quantum walk on a graph, 
Journal of Math-for-Industry {\bf 5} (2013), pp.103--109.

\bibitem{HKSS:PJMI}
Yu. Higuchi, N. Konno, I. Sato and E. Segawa, 
A remark on zeta functions of finite graphs via quantum walks, 
Pacific Journal of Math-for-Industry {\bf 6} (2014), pp.73--81. 

\bibitem{HKSS:YMJ}
Yu. Higuchi, N. Konno, I. Sato and E. Segawa, 
Quantum graph walks I: mapping to quantum walks, 
Yokohama Mathematical Journal {\bf 59} (2013), pp.33-55.

\bibitem{GG}
C. Godsil and K. Guo, 
Quantum walks on regular graphs and eigenvalues, 
Electron. J. Combin. {\bf 18} (2011) P165. 

\bibitem{Gudder}
S. Gudder, 
Quantum Probability, 
Academic Press Inc., CA, 1988.

\bibitem{Severini}
S. Severini, 
On the digraph of a unitary matrix, 
SIAM Journal on Matrix Analysis and Applications {\bf 25} (2003), pp.295-300.

\bibitem{Tan}
G. Tanner, 
From quantum graphs to quantum random walks, 
In: Non-linear Dynamics and Fundamental Interactions NATO Science Series II: Mathematics, Physics
and Chemistry {\bf 213} (2006), pp.69--87. Springer, Netherlands.

\end{thebibliography}

\end{small}


\end{document}